\documentclass[12pt]{amsart}
\usepackage[cp1250]{inputenc}
\usepackage[T1]{fontenc}
\usepackage{amscd}

\usepackage{amsmath}
\usepackage{amsthm}
\usepackage{amssymb}
\usepackage{amscd}
\usepackage{graphicx}
\usepackage{epsfig}
\usepackage{bbm}

\theoremstyle{plain}
\newtheorem{thm}{Theorem}[section]
\newtheorem{lem}[thm]{Lemma}
\newtheorem{prop}[thm]{Proposition}
\newtheorem{cor}[thm]{Corollary}

\theoremstyle{definition}
\newtheorem{dff}[thm]{Definition}
\newtheorem{rem}[thm]{Remark}
\newtheorem{exa}[thm]{Example}

\overfullrule=0pt \textwidth=139.2mm \oddsidemargin=10mm
\evensidemargin=10mm

\numberwithin{equation}{section}
\pagestyle{plain}

\def\tt{\mathrm {\bf t}}
\def\ss{\mathrm {\bf s}}
\def\mm{\mathrm m}
\def\ii{\mathrm i}
\def\i{\iota}
\def\we{\wedge}

\def\f{\varphi}
\def\fs{\sigma}

\def\p{\partial}
\def\a{\alpha}
\def\b{\beta}
\def\o{\omega}

\def\La{\Lambda}
\def\F{\mathcal F}

\def\cc{\CC^{\infty}}

\def\ci{\circ}

\def\rz{\mathbb{R}}
\def\R{\rz}

\def\s{\subset}
\def\ddt{{d\over d t}}
\def\mt{\mapsto}
\def\cm{\CC^{\infty}(M,\mathbb R)}
\def\ccm{\CC^{\infty}_c(M,\mathbb R)}

\def\cmm{\CC^{\infty}(I\t M,\mathbb R)}
\def\ccmm{\CC^{\infty}_c(I\t M,\mathbb R)}

\def\t{\times}
\def\r{\rightarrow}
\def\rr{\rightrightarrows}
\def\cc{\CC^{\infty}}

\def\pp{\partial}
\def\pl{\parallel}

\def\plaa{\pl_{\infty}}
\def\la{\lambda}
\def\gg{\Gamma}

\def\ra{\varrho_{H}}

\def\rb{\ra}

\def\rh{\varrho_H}

\def\la{\lambda}

 \DeclareMathOperator{\sect}{Sect}
\DeclareMathOperator{\graph}{graph}
 \DeclareMathOperator{\symp}{Symp}

 \DeclareMathOperator{\diff}{Diff}
 
 \DeclareMathOperator{\ham}{Ham}
 \DeclareMathOperator{\id}{id}
\DeclareMathOperator{\fl}{Fl}

\DeclareMathOperator{\CC}{C} 
\DeclareMathOperator{\length}{length}

\def\m{(M,\Lambda)}
\def\LL{\Lambda}
\def\lea{\length}

\def\l{\length}

\def\fll{\mathcal F_{\Lambda}}

\keywords{Poisson manifold, Poisson map, Hamiltonian diffeomorphism, Hamiltonian group,  Hofer metric,
 non-degeneracy, symplectic groupoid, integrability of Poisson manifolds} \subjclass{53D17, 57R17}
\address{Faculty of Applied Mathematics, AGH University of Science and
\linebreak Technology, al. Mickiewicza 30, 30-059 Krak\'ow,
Poland} \email{tomasz@agh.edu.pl}
\date{May 23, 2016}

\title{ On the existence   of a Hofer type metric for Poisson manifolds}

\author{ Tomasz Rybicki}

\begin{document}

\maketitle

\begin{abstract} An analogue  of the Hofer metric $\ra$ on the Hamiltonian group $\ham\m$ of a Poisson manifold $\m$ can be defined but there is the
problem of its non-degeneracy. First we observe that $\ra$ is a genuine metric on $\ham\m$  when  the union of all proper leaves of the corresponding symplectic foliation 
is dense.
Next we deal with the important class of integrable Poisson manifolds. Recall that a Poisson manifold is called integrable if it can be realized as the space 
of units of a symplectic groupoid. Our main result states that $\ra$ is a Hofer type metric for every  Poisson manifold which admits a Hausdorff integration.

\end{abstract}

\section{Introduction}

 Given any symplectic manifold $(N,\o)$, the Hofer metric $\rh$ is a bi-invariant metric on the group of
compactly supported Hamiltonian symplectomorphisms  $\ham(N,\o)$ of  $(N,\o)$.  Hofer geometry constitutes a basic tool in
symplectic topology (see \cite{HZ}, \cite{MS}, \cite{MS1},
\cite{Pol1}). Our aim is to generalize the Hofer metric to the case of Poisson manifolds, a non-transitive counterpart of symplectic manifolds.
It is a rather easy observation that there exists a possibly degenerate analogue of the Hofer metric $\ra$ on the Hamiltonian group $\ham\m$ of an arbitrary Poisson manifold $\m$.
In the  case of many important types of Poisson manifolds $\ra$ occurs to be a genuine bi-invariant metric (see Theorems 1.1 and 1.3). 
Thus, the Hofer geometry could be considered in a much more general setting.

Let $(M, \La)$ be a Poisson manifold,
i.e. $M$ is a  smooth paracompact manifold endowed with a bivector $\La$, called the \emph{Poisson bivector},  which satisfies \begin{equation}[\Lambda ,\Lambda ]=0,
\end{equation} where
$[\cdot,\cdot]$ is the Schouten-Nijenhuis bracket (cf.\cite{Vai}). The bivector $\La$ induces a 'musical' bundle  homomorphism 
$  \sharp :T^*M\r TM$ given by $ \beta_x (\sharp\alpha_x )=\La_x (\alpha_x ,\beta_x )$ for all $x\in M$ and all 1-forms $\a, \b$.
The image of $\sharp $ integrates to a generalized foliation, denoted by $\F_{\Lambda}$ and called the \emph{symplectic} foliation.
 The homomorphism $\sharp$ allows us to define the group of compactly supported  Hamiltonian diffeomorphisms $\ham\m$
as well (see section 2.2).

For any $F\in\CC^\infty_c(M,\R)$ denote \begin{equation} \pl
F\plaa=\max_{p\in M}F(p)-\min_{p\in M}F(p).\end{equation}
Let $\mathcal P\ham\m$ be the set of Hamiltonian isotopies of $\m$ starting at the identity (see section 2.2). Since the Lie algebra $\ccm$ contains Casimir elements,
 $\mathcal P\ham\m$ will be regarded as the set of all pairs
\begin{equation}
\Phi_F=(\{\phi_F^t\},F),\quad F\in\ccmm,
\end{equation}  where  $I=[0,1]$ and the  isotopy $\{\phi_F^t\}$ is generated by a Hamiltonian $F\in\ccmm$. In particular, $\phi_F^0=\id$. Next,  by using (1.2) we define the length
of $\Phi_F$
\begin{equation}
\l(\Phi_F)=\int_0^1\pl F(t,\cdot)\plaa dt.
\end{equation}
 For $F\in\ccmm$ the symbol  $F\mt\phi$ will denote that the isotopy  $\{\phi_F^t\}$ satisfies the condition $\phi_F^1=\phi$.
Then for  $\phi\in\ham\m$ we set
\begin{equation}\rb(\phi,\id)=\inf\{\l(\Phi_F)|\; F\in\ccmm,\; F\mt\phi\},
\end{equation}
Here $\Phi_F$ is defined by (1.3) and $\l(\Phi_F)$ by (1.4).

%Alternatively we may write
%\begin{equation*}\rb(\phi,\id)=\inf\{\l(\{\phi^t\})|\; \{\phi^t\}\in\mathcal P\ham\m,\;\hbox{with}\;\phi^0=\id, \phi^1=\phi\},
%\end{equation*}
%where $$\l(\{\phi^t\})=\inf\Big\{\int_0^1\pl F(t,\cdot)\plaa dt\Big|\; F\in\ccmm, (\forall t) \phi_F^t=\phi^t\Big\}.$$ 

\begin{thm}  Given a Poisson manifold $(M,\La)$, the formula (1.5) defines a bi-invariant pseudo-metric $\ra$ on the group of Hamiltonian diffeomorphisms $\ham\m$ of $\m$.
Moreover, $\ra$ is a metric whenever the union of all proper  leaves of  the symplectic foliation $\F_{\Lambda}$ of $\m$ is dense.
\end{thm}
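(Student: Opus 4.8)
The plan is to prove the pseudo-metric properties first and then the non-degeneracy criterion.

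\textbf{Pseudo-metric properties.}
First I would check that $\rb$ is well-defined, i.e.\ the infimum in (1.5) is taken over a nonempty set: for any $\phi\in\ham\m$ there is, by definition of the Hamiltonian group, some $F\in\ccmm$ with $F\mt\phi$, so $\l(\Phi_F)<\infty$ and $\rb(\phi,\id)\in[0,\infty)$. Symmetry $\rb(\phi,\id)=\rb(\phi^{-1},\id)$ follows from the standard trick: if $F\mt\phi$ then $\bar F(t,\cdot):=-F(t,\phi_F^t(\cdot))$ generates the reverse isotopy and $\phi_{\bar F}^1=\phi^{-1}$, while $\pl\bar F(t,\cdot)\plaa=\pl F(t,\cdot)\plaa$ since composing with a diffeomorphism does not change $\max-\min$; hence $\l(\Phi_{\bar F})=\l(\Phi_F)$. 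For the triangle inequality, given $F\mt\phi$ and $G\mt\psi$, concatenate the isotopies (reparametrizing so the concatenation is smooth at the junction, using a cut-off in $t$ that does not change the time-integral up to $\varepsilon$), obtaining a Hamiltonian $H$ with $H\mt\psi\phi$ and $\l(\Phi_H)\le\l(\Phi_F)+\l(\Phi_G)$; taking infima gives $\rb(\psi\phi,\id)\le\rb(\phi,\id)+\rb(\psi,\id)$, and bi-invariance then reduces this to the usual triangle inequality. Bi-invariance itself: right-invariance is immediate because $\{\phi_F^t\ci\eta\}$ is generated by the same $F$ (so lengths agree), and left-invariance follows from conjugation — if $F\mt\phi$ then $F\ci\eta^{-1}$ (the pushforward Hamiltonian, which lies in $\ccmm$ because $\eta$ is a Poisson diffeomorphism) generates $\{\eta\phi_F^t\eta^{-1}\}$ and again $\pl F\ci\eta^{-1}\plaa=\pl F\plaa$. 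Finally $\rb(\id,\id)=0$ using the zero Hamiltonian. This establishes that $\rb$ is a bi-invariant pseudo-metric.

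\textbf{Non-degeneracy.}
Now assume the union $U$ of all proper leaves of $\fll$ is dense; I must show $\phi\ne\id$ implies $\rb(\phi,\id)>0$. Since $\phi\ne\id$, there is a point $x_0$ with $\phi(x_0)\ne x_0$, and by density of $U$ and continuity of $\phi$ I can choose $x_0$ lying on a proper leaf $L$ with $\phi(x_0)\ne x_0$; note $\phi$ preserves each symplectic leaf, so $x_0,\phi(x_0)\in L$. The idea is to run, leafwise, the classical argument that the Hofer norm bounds a displacement-energy-type quantity: on the symplectic manifold $(L,\o_L)$ the restricted isotopy $\{\phi_F^t|_L\}$ is Hamiltonian with Hamiltonian $F(t,\cdot)|_L$, whose oscillation is at most $\pl F(t,\cdot)\plaa$, so $\l_{\o_L}(\{\phi_F^t|_L\})\le\l(\Phi_F)$; hence $\rb(\phi,\id)\ge\rh^{L}(\phi|_L,\id_L)$, the ambient-manifold Hofer distance on $L$ (here I use that $\phi|_L$ has compact support in $L$ because $\supp F$ is compact). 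The point of \emph{properness} of $L$ is exactly that $L$, with its intrinsic leaf topology, is an embedded (not merely immersed) symplectic submanifold on which the standard non-degeneracy of the Hofer metric applies — so $\phi|_L\ne\id_L$ forces $\rh^{L}(\phi|_L,\id_L)>0$, whence $\rb(\phi,\id)>0$.

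\textbf{Main obstacle.}
The delicate point is the leafwise energy inequality $\rb(\phi,\id)\ge\rh^{L}(\phi|_L,\id_L)$ together with the non-degeneracy of $\rh^L$ on a proper leaf. One must be careful that for a \emph{proper} leaf the induced topology agrees with the subspace topology, so that compactly supported objects on $L$ behave as on an honest symplectic manifold and one may invoke a non-degeneracy result (e.g.\ via the positivity of displacement energy, Hofer's / Lalonde--McDuff's theorem, or an elementary capacity argument on a small symplectically embedded ball in $L$ displaced by $\phi|_L$). A non-proper leaf could be dense in a higher-dimensional set and this argument would fail there, which is precisely why the density-of-proper-leaves hypothesis is invoked — if $\phi$ is nontrivial it must move a point on some proper leaf, and there the classical obstruction survives. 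The concatenation/smoothing in the triangle inequality and the verification that pushforward Hamiltonians stay in $\ccmm$ are routine and I would only sketch them.
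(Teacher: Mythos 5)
Your proposal is correct and follows essentially the same route as the paper: the pseudo-metric axioms via the standard $\overline F$, concatenation, and pushforward-Hamiltonian identities, and non-degeneracy by locating a point moved by $\phi$ on a proper leaf $L$ (possible by density and openness of the displaced set), restricting the Hamiltonian to $L$ — where properness guarantees $F|_{I\times L}$ is compactly supported on the honest symplectic manifold $(L,\sigma_L)$ — and invoking the Lalonde--McDuff non-degeneracy together with the obvious oscillation inequality $\pl F|_L\plaa\le\pl F\plaa$. The paper justifies smoothness and compact support of the restricted Hamiltonian by appealing to the Weinstein/Dazord splitting theorems, but this is the same point you make via the embeddedness of proper leaves.
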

Here the properness of a leaf $L$ means that the leaf topology of $L$ coincides with the induced topology from $M$ and that the dimension of $L$ is positive.

Recently, Sun and Zhang \cite{SZ} claimed  the non-degeneracy of $\ra$ for all regular Poisson manifolds. However, their proof contains an essential error (see 
Remark 3.2).

Denote by $\sigma_L$ the symplectic form living on a leaf $L\in\F_{\Lambda}$, and by $c_L$ the Gromov width of the symplectic manifold $(L,\sigma_L)$.
Then we have the following generalization of the energy-capacity inequality.
\begin{cor}
 Let $\m$ be an arbitrary Poisson manifold.    For any open ball $U\s M$ 
let $c_{\La}(U)$ denote  $\sup\{c_L(U\cap L)\}$, where $L$ runs over all proper leaves of $\F_{\Lambda}$ such that $U\cap L$ is an open ball in $L$ (and we 
put $c_{\La}(U)=0$ if there is no leaf $L$ satisfying the above condition). Then we have
\begin{equation}
 \frac{1}{2}c_{\La}(U)\leq E(U),\end{equation} where $E(U)$
 is the displacement energy of $U$ defined by means of $\ra$, see section 3.3. Consequently, if $\m$ is such that the union of all proper  leaves of  the symplectic foliation $\F_{\Lambda}$  is dense in $M$, then $E(U)>0$ for all open sets $U$.

\end{cor}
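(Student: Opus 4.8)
The plan is to transport the problem onto a single symplectic leaf, where the classical energy--capacity inequality of symplectic topology applies, using that Hamiltonian vector fields on $\m$ are tangent to — hence that Hamiltonian flows preserve — the symplectic foliation $\F_{\Lambda}$.

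I would fix an open ball $U\s M$ and assume $c_{\La}(U)>0$, the inequality (1.6) being trivial otherwise. Given $\ve>0$, I would choose a proper leaf $L\in\F_{\Lambda}$ with $U\cap L$ an open ball in $L$ and $c_L(U\cap L)>c_{\La}(U)-\ve$, and a Hamiltonian diffeomorphism $\phi\in\ham\m$ with $\phi(U)\cap U=\emptyset$ together with a generating $F\in\ccmm$, $F\mt\phi$, satisfying $\l(\Phi_F)<E(U)+\ve$. The key point is then to restrict the isotopy to $L$: since for each $t$ the Hamiltonian vector field $\sharp\,dF(t,\cdot)$ takes values in the tangent distribution of $\F_{\Lambda}$, the isotopy $\{\phi_F^t\}$ maps $L$ onto itself, and its restriction $\{\psi^t:=\phi_F^t|_L\}$ is precisely the Hamiltonian isotopy of the symplectic manifold $(L,\sigma_L)$ generated by $F_L:=F|_{I\t L}$.

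Here properness of $L$ is essential: because the leaf topology of $L$ coincides with the topology induced from $M$, the set $\supp F_L\s(\supp F)\cap(I\t L)$ is compact, so $F_L\in\CC^\infty_c(I\t L,\R)$ and $\psi^1\in\ham(L,\sigma_L)$, with $(\{\psi^t\},F_L)$ a legitimate element of $\mathcal P\ham(L,\sigma_L)$. Moreover $\psi^1(U\cap L)\cap(U\cap L)\s\phi(U)\cap U=\emptyset$, so $\psi^1$ displaces $U\cap L$ in $L$, and since $\max_L F_L(t,\cdot)-\min_L F_L(t,\cdot)\le\max_M F(t,\cdot)-\min_M F(t,\cdot)$ for every $t$, the length of $\{\psi^t\}$ in $L$ does not exceed $\l(\Phi_F)$. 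Writing $E_L$ for the Hofer displacement energy of $(L,\sigma_L)$, this yields $E_L(U\cap L)\le\l(\Phi_F)<E(U)+\ve$. I would then invoke the classical energy--capacity inequality on the (possibly open) symplectic manifold $(L,\sigma_L)$, namely $\frac12 c_L(U\cap L)\le E_L(U\cap L)$ (cf.\ \cite{HZ}, \cite{Pol1}); combining the last two inequalities and letting $\ve\to0$ gives (1.6).

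For the final assertion, assuming the union of the proper leaves of $\F_{\Lambda}$ is dense, I would take a nonempty open set $U$, a proper leaf $L$ with $L\cap U\ne\emptyset$, and a point $p\in L\cap U$; using a Weinstein splitting chart around $p$, a sufficiently small ball $V\s U$ about $p$ meets $L$ in an open ball, whence $c_{\La}(V)\ge c_L(V\cap L)>0$ and, by (1.6), $E(V)\ge\frac12 c_{\La}(V)>0$; since every $\phi\in\ham\m$ displacing $U$ also displaces $V$, we get $E(U)\ge E(V)>0$. I expect the restriction step to be the main obstacle — in particular verifying that $F_L$ has compact support and that $(\{\psi^t\},F_L)$ genuinely lies in $\mathcal P\ham(L,\sigma_L)$, so that the length estimate is meaningful; the rest is bookkeeping together with the cited symplectic energy--capacity inequality.
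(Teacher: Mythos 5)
Your argument is correct and follows essentially the same route as the paper: restrict the displacing Hamiltonian isotopy to a proper leaf $L$ to obtain $E_L(U\cap L)\le E(U)$, apply the symplectic energy--capacity inequality on $(L,\sigma_L)$, and take the supremum over admissible leaves, with the second assertion handled identically by shrinking to a ball $V\s U$ meeting a proper leaf in a ball and using monotonicity of $E$. The only cosmetic differences are that you make the $\ve$-bookkeeping explicit and justify the compact support of $F_L$ directly via properness of $L$, where the paper defers this step to the proof of Theorem 1.1 and cites the Weinstein/Dazord splitting theorems.
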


Our main result is the following

\begin{thm}
Suppose that $\m$ is a Poisson manifold which is integrated by a Hausdorff symplectic groupoid. Then the bi-invariant pseudo-metric $\ra$ is a genuine metric.
\end{thm}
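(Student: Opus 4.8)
The plan is to reduce the non-degeneracy of $\ra$ on $\ham\m$ to Theorem 1.1, by exploiting the geometry of a Hausdorff symplectic groupoid $\G \rr M$ integrating $\m$. The key structural fact I would invoke is that, for an integrable Poisson manifold with Hausdorff integration, the symplectic foliation $\F_\Lambda$ is particularly well behaved: each leaf $L$ is an \emph{orbit} of the groupoid, and the source-simply-connected integration being Hausdorff forces the leaves to be reasonably embedded. More precisely, I would argue that the union of all proper leaves is dense in $M$, and then Theorem 1.1 immediately yields that $\ra$ is a genuine metric. So the whole proof is: \emph{Hausdorff integrability $\Rightarrow$ density of proper leaves}.

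First I would recall that a zero-dimensional leaf is just a fixed point of the Poisson structure, and the set $M_0$ of such points is closed with empty interior unless $\Lambda$ vanishes on an open set; on any open set where $\Lambda$ vanishes identically, $\ham$ acts trivially and there is nothing to prove, so I may assume $M_0$ has empty interior. Thus it suffices to show that the union of \emph{proper positive-dimensional} leaves is dense in $M\setminus M_0$. Here I would use the groupoid: over $M\setminus M_0$ the leaves are the images of the source fibers under the target map, and for a Hausdorff groupoid these orbits are, generically, embedded submanifolds. The precise tool is the stratification of $M$ by the connected components of the sets $M_k = \{x : \dim \F_\Lambda(x) = k\}$; on the open dense stratum $M_{\mathrm{reg}}$ corresponding to the maximal rank, $\F_\Lambda$ is a genuine regular foliation, and I would show that Hausdorffness of $\G$ implies this regular foliation has no leaves whose closure is bad — i.e. that the set of proper leaves is dense inside $M_{\mathrm{reg}}$, hence (together with $M_{\mathrm{reg}}$ being dense in $M$) dense in $M$.

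The step I expect to be the main obstacle is exactly this last point: showing that inside the regular stratum $M_{\mathrm{reg}}$ the proper leaves form a dense set, using only that the integrating groupoid is Hausdorff. The naive hope — that Hausdorffness makes \emph{every} leaf proper — is false (irrational flows on a torus can arise), so one genuinely needs a density argument. I would attack it by a Baire-category / local-triviality argument: near a point of $M_{\mathrm{reg}}$ choose a foliation chart; a leaf fails to be proper exactly when it accumulates on itself, and I would show that the set of such "non-proper" starting points is meager, because properness is detected by the holonomy groupoid being Hausdorff, which is implied by (indeed, closely tied to) Hausdorffness of $\G$ restricted over $M_{\mathrm{reg}}$. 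Concretely, I would invoke that a Hausdorff symplectic groupoid restricts over the regular part to a Hausdorff Lie groupoid integrating the regular Poisson structure, whose orbit foliation therefore has Hausdorff holonomy groupoid; and a foliation with Hausdorff holonomy groupoid has a dense set of proper (in fact, in the codimension-one transverse picture, closed-in-a-neighborhood) leaves. If a fully general such statement is not available off the shelf, the fallback is to prove it by hand in a transversal: the closure of a leaf is a union of leaves, the "limit" leaves lie in a lower-dimensional transverse closed set, and removing their union leaves a residual set of proper leaves — this is where the careful topology lives, and it is the heart of the argument.

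Finally, once density of proper leaves is established, I would simply cite Theorem 1.1 to conclude that $\ra$ is a metric, completing the proof. I would also remark that this shows Theorem 1.3 is genuinely stronger than the regular case claimed (erroneously) in \cite{SZ}, since Hausdorff integrability is a condition that is satisfied by large classes of singular Poisson manifolds (e.g. those with integrable, Hausdorff source-simply-connected groupoid), not merely regular ones.
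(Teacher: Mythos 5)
Your strategy is to reduce Theorem 1.3 to Theorem 1.1 by showing that Hausdorff integrability forces the union of proper leaves to be dense. This is not the paper's route, and it has a fatal gap precisely at the step you yourself flag as ``the heart of the argument'': the claim that a (regular) foliation whose holonomy groupoid is Hausdorff has a dense set of proper leaves is false, and so is the Poisson version of it. For the pure foliation statement, the Kronecker foliation of $T^2$ has holonomy groupoid the action groupoid $T^2\rtimes\R\cong T^2\times\R$, which is Hausdorff, yet every leaf is dense and hence non-proper. For the Poisson statement, take a constant bivector $\La$ of rank $2$ on $T^4$, say $\La=(\p_1+a\p_3)\we(\p_2+b\p_4)$ with $a,b$ irrational and generic. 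Then $[\La,\La]=0$, every symplectic leaf is a densely embedded copy of $\R^2$, so there are \emph{no} proper leaves at all; nevertheless the cotangent algebroid is the action algebroid of the abelian Lie algebra $\R^4$ acting by the constant vector fields $\sharp\xi$, which integrates to the Hausdorff symplectic groupoid $T^4\rtimes\R^4\cong T^*T^4$. So Hausdorff integrability simply does not imply density of proper leaves, and no Baire-category or transversal argument can rescue the implication. (This example also shows why Theorem 1.3 is genuinely stronger than Theorem 1.1, rather than a corollary of it.) A secondary issue: even granting your claim, you only sketch how you ``would attack'' it, so nothing is actually proved.

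The paper's proof goes the other way around: instead of analyzing the leaves of $M$, it transports the problem \emph{up} to the total space of the groupoid, which is an honest Hausdorff symplectic manifold. Given $\phi\neq\id$ generated by $F$, one lifts $F$ to $\ss^*F$ on $(\gg,\o)$; since $\ss$ is a complete symplectic realization, the lifted Hamiltonian flow exists for all time, projects to $\{\phi_F^t\}$ via $\ss_*\tilde X_F=X_F$, and has the same length. If $\phi$ displaces an open set $U$, the lifted isotopy displaces $\ss^{-1}(U)$; cutting off the lifted Hamiltonian by a compactly supported bump function $\la$ equal to $1$ near a ball $B\s\ss^{-1}(U)$ (this is where Hausdorffness of $\gg$ is used, so that $F^{\la}$ is a legitimate compactly supported smooth function) produces a compactly supported Hamiltonian diffeomorphism of $(\gg,\o)$ displacing $B$, and the symplectic energy--capacity inequality on $\gg$ gives $0<\tfrac12 c(B)\leq\l(\Phi_F)$. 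You should redo the proof along these lines, or at least verify your density claim against the torus example above before pursuing your reduction.
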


\begin{rem}
The proof of the non-degeneracy of the Hofer metric is hard, and \emph{a fortiori} the problem of whether $\rh$ is degenerate or non-degenerate is possibly even harder for Poisson manifolds.
This is the reason that I cannot provide any example of a possible degeneracy of $\rh$.
\end{rem}

In section 5 we shall give  examples of Poisson manifolds integrated by a Hausdorff symplectic groupoid. The significance of the integrability follows from the fact that the existence of a groupoid
integrating $\m$ can be thought of as a desingularization of it (cf. the paragraph before Remark 6 in \cite{CF2}). A program of using symplectic groupoids
in the quantization theory was formulated and partially carried out in the papers by Weinstein \cite{We2},
Weinstein and Xu \cite{WX}, Zakrzewski \cite{Za} and in some other papers.  The program may be  described in the following way: "if a Poisson manifold $\m$ is seen 
as the phase space of a mechanical system, the symplectic groupoid $(\gg,\sigma)$ that integrates $\m$ (if it exists) can also be interpreted  as a phase space of the system, which has more coordinates than
 really needed but, instead it has some kind of \emph{symmetries}, the left and right translations of the groupoid. Hence, it is natural to look at the quantization of $(\gg,\sigma)$ as the corresponding quantum system, and to study whether it  also has similar symmetries", cf. \cite{Vai}, p.158.  Thus, the integrability property is very important and natural in the category of Poisson manifolds and the Hofer metric might be an important tool
in Poisson geometry.

It is a striking fact that the "symplectic" proof of Theorem 1.3 (section 4.2) makes use essentially of the integrability property. On the other hand, bearing in mind the 
 hard symplectic methods in the proof of non-degeneracy of $\ra$ in the symplectic case (cf. \cite{LM} and references therein), one could hardly expect that a specific "Poisson" proof,
possibly for all Poisson manifolds, is available.

Jacobi manifolds constitute an important generalization of Poisson manifolds, cf.\cite{DLM}. A general problem is to introduce a Hofer type metric on the group of strict Hamiltonian
diffeomorphisms of a Jacobi manifold. More precisely, it is straightforward to introduce a bi-invariant pseudo-metric, but there is the problem of its non-degeneracy. Every leaf of the generalized foliation induced by a Jacobi structure admits either a symplectic structure, or a locally conformal symplectic structure (which is not symplectic), or a contact structure. Banyaga and Donato introduced a bi-invariant metric of Hofer type on the strict contactomorphism group
of a special kind of contact manifold \cite{BD}. Recently, M\"uller and Spaeth extended this result to all contact manifolds \cite{MSp}. Locally conformal symplectic manifolds form an important generalization of symplectic geometry and many facts from this geometry can be carried over to the locally conformal symplectic case (\cite{HR1}, \cite{HR2}, \cite{BK}). However, it seems that a possible proof of  the non-degeneracy of the pseudo-metric in this case is hard, since it cannot be established by appealing to the symplectic energy-capacity inequality like in \cite{MSp} or in the present paper.

\medskip

{\bf Acknowledgments.} I would like to express my gratitude to the referees for critical comments and valuable remarks which enable me to improve essentially the first version
of the paper.

\section{The groups of automorphisms of a Poisson manifold}

\subsection{Poisson manifolds} 
Recall that a Poisson structure on a smooth manifold $M$, introduced first by  Lichnerowicz,  can be defined by a bivector
 $\Lambda $ such that (1.1) is fulfilled.  Then the rank of $\La _x$
may vary but it is even everywhere.
 We have the 'musical' bundle  homomorphism $\sharp$ associated with $\La $ defined for any $x\in M$ by
$$  \sharp=\sharp_{\La} :T^*M\r TM, \quad \beta_x(\sharp\alpha_x)=\La_x (\alpha_x ,\beta_x ),\quad  \forall \a,\beta\in \sect(T^*M).$$
In the case of $\La $  nondegenerate (i.e. the rank of $
 (\La_x )$ is equal to $
\dim(M)$),  we get
a symplectic structure $\o$, and $\sharp $ is an isomorphism.
The distribution $\sharp(T^*_xM),\ x\in M$,
integrates to a generalized foliation  such that $\La $ restricted to any
leaf induces a symplectic structure. This foliation is called \emph{symplectic}
and  denoted by $\F_{\La }$. If the mapping $\sharp_{\La}$ has  constant rank, then
 the Poisson structure $\La $ is called \emph{regular}.

Consider  a bivector field $\LL$  on a smooth manifold $M$. For all smooth functions $F,H\in\cm$ we set 
\begin{equation}
\{F,H\}=\i(\LL)(dF\we dH)=\LL(dF\we dH),
\end{equation}
where $\i$ is the interior product. Clearly the above bracket is 2-linear and antisymmetric. It is known that if $\La$ satisfies the equality (1.1), then  the bracket (2.1) fulfills the Jacobi identity
\begin{equation}
\{F,\{G,H\}\}+\{H,\{F,G\}\}+\{G,\{H,F\}\}=0,\quad F,G,H\in\cm,
\end{equation}
as well as the  Leibniz rule
$$\{F,GH\}=\{F,G\}H+G\{F,H \}.$$

Since $\{F,\cdot\}$ is a derivation of $\cm$ the following holds. For any $F\in\cm$ there is a well-defined vector field $X_F$ such that for all $H\in\cm$ we have
\begin{equation}
\{F,H\}=X_FH=-X_HF.
\end{equation}
Then in view of (2.2) and (2.3) we conclude that 
\begin{equation}
X_{\{F,H\}}=[X_F,X_H],
\end{equation}
that is $\cm\ni F\mt X_F\in\frak X(M)$ is a Lie algebra homomorphism. (Hereafter we follow the sign convention from Vaisman's monograph \cite{Vai}.)

\subsection{Automorphisms of a Poisson manifold}
For a smooth manifold $M$ by $\diff_c(M)_o$ we denote the compactly supported identity component of the group of all diffeomorphisms on $M$, and let $\mathcal P\diff_c(M)_o=
\{f\in\cc(\R,\diff_c(M)_o| f(0)=\id\}$ be the set of smooth isotopies. If $\frak X_c(M)$ is the Lie algebra of all compactly supported
vector fields on $M$, then we have the bijection
\begin{equation}\mathcal P\diff_c(M)_o\ni\{f_t\}\mt \{\dot f_t\}\in\cc(\R,\frak X_c(M)),\end{equation} where
for all $p\in M$ and $t\in\R$ we have
\begin{equation}
         \dot f_t(p)={\pp f_t\over \pp t}( f_t^{-1}(p)).
\end{equation}
In particular, a time-independent
vector field $X\in\frak X_c(M)$ corresponds to its flow
$\fl^X\in\mathcal P\mathcal\diff_c(M)_o$.

 Likewise, for any $\f\in\diff_c(M)_o$
the space $\mathcal P_\f\diff_c(M)_o$ of all smooth isotopies starting at $\f$ identifies with $\CC^{\infty}(\R,\frak X_c(M))$ by
\begin{equation}\mathcal P_\f\diff_c(M)_o\ni\{f_t\}\mt \{\dot{\overbrace{f_t\ci\f^{-1}}}\}\in\cc(\R,\frak X_c(M)).\end{equation}

The following is easy to check. \begin{lem} Let $\{f_t\},
\{g_t\}\in\mathcal P\diff_c(M)_o$ and $\phi\in\diff(M)$.
Then:
\begin{enumerate}
\item  $\dot{\overbrace{f_tg_t}}=\dot f_t+(f_t)_*(\dot g_t)$.
\item  $\dot{\overbrace{f_t^{-1}}}=-(f_t^{-1})_*(\dot f_t)$. \item
$\dot{\overbrace{\phi f_t\phi^{-1}}}=\phi_*(\dot f_t)$.
\end{enumerate}
\end{lem}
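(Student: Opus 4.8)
The plan is to verify each identity by direct computation from the definition (2.6), carrying the base points carefully; the product rule (1) is the master identity, and (2) follows from it formally while (3) is an entirely parallel computation. For (1) I would set $h_t=f_t\ci g_t$ and differentiate in $t$ by the chain rule, noting that $t$ enters through both factors:
\begin{equation*}
\frac{\pp h_t}{\pp t}(p)=\Big(\frac{\pp f_t}{\pp t}\Big)(g_t(p))+(Df_t)_{g_t(p)}\Big(\frac{\pp g_t}{\pp t}(p)\Big).
\end{equation*}
By (2.6) the field $\dot h_t$ is this expression evaluated at $p=h_t^{-1}(q)=g_t^{-1}(f_t^{-1}(q))$. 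Under that substitution $g_t(p)=f_t^{-1}(q)$, so the first summand becomes $(\pp_t f_t)(f_t^{-1}(q))=\dot f_t(q)$, while the second is $(Df_t)_{f_t^{-1}(q)}\big(\dot g_t(f_t^{-1}(q))\big)$, since $\frac{\pp g_t}{\pp t}(g_t^{-1}(f_t^{-1}(q)))=\dot g_t(f_t^{-1}(q))$ by (2.6) again. Recognizing the last expression as the value at $q$ of the pushforward $(f_t)_*\dot g_t$ — using the convention $(f)_*X(q)=(Df)_{f^{-1}(q)}(X(f^{-1}(q)))$ — gives $\dot h_t=\dot f_t+(f_t)_*\dot g_t$, which is (1).

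For (2) I would apply (1) to the pair $\{f_t\},\{f_t^{-1}\}$. Their composition is the constant isotopy at the identity, whose associated field vanishes identically, so (1) yields $0=\dot f_t+(f_t)_*(\dot{\overbrace{f_t^{-1}}})$; pushing forward by $(f_t^{-1})_*=((f_t)_*)^{-1}$ then gives (2). For (3) I would run the computation of (1) again with $m_t=\phi f_t\phi^{-1}$, where now only the middle factor depends on $t$: differentiating yields $\frac{\pp m_t}{\pp t}(p)=(D\phi)_{f_t(\phi^{-1}(p))}\big(\frac{\pp f_t}{\pp t}(\phi^{-1}(p))\big)$, and evaluating at $p=m_t^{-1}(q)=\phi f_t^{-1}\phi^{-1}(q)$ the two copies of $\phi^{-1}$ telescope so that the argument of $\dot f_t$ reduces to $\phi^{-1}(q)$ and $D\phi$ is based at $\phi^{-1}(q)$, i.e. the result is $\phi_*(\dot f_t)(q)$.

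There is no genuine obstacle here; the only thing to be careful about is the bookkeeping of base points, and the fact that the noncommutativity of composition makes (1) asymmetric — the pushforward is by the left factor $f_t$ and not by $g_t$. Once the convention for $(f)_*$ is fixed consistently with (2.6), all three identities drop out.
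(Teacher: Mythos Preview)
Your argument is correct: each identity follows by the direct chain-rule computation you outline, with the base-point bookkeeping handled properly via the definition (2.6) and the standard pushforward convention. The paper itself offers no proof beyond ``the following is easy to check,'' so your write-up is exactly the routine verification the paper leaves to the reader.
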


For  arbitrary Poisson manifolds $(M_1,\La_1)$ and $(M_2,\La_2)$
a smooth mapping $f$ of $(M_1,\La_1 )$ into $(M_2,\La_2)$ is called a \emph{Poisson
morphism} if
$$
\{F\circ f,H\circ f\}=\{F,H\}\circ f \quad \hbox {for any}\ F,H\in\cc(M_2,\R).
$$
In this case we have
\begin{equation}
f_*X_{F\ci f}=X_F,
\end{equation}
see, e.g., \cite{Vai}, p.97. The symbol $\diff(M,\La )$ stands for the group of all Poisson automorphisms
of $(M,\La )$.
Next, $\frak {ham}\m$ denotes the space of all (compactly supported) Hamiltonian vector fields, i.e. $X\in \frak{ham}\m$ iff there exists a compactly
supported function $F\in \ccm$
such that
$X=[\La ,F]$ or, equivalently, \begin{equation} X=\sharp(dF).\end{equation} Then we will write $X=X_F$ and this definition coincides with that given by (2.3).   Recall that
a vector field $X$ is an infinitesimal automorphism of $(M,\La )$ if $[\La ,X]=0$,
that is $L_X\La =0$, where $L$ is the Lie derivative. Let $\frak X\m$ stand for the space of all i.a. of $\m$.
Observe that for $Y\in \frak X(M,\La )$, $X_F\in \frak{ham}(M,\La )$ we get $[Y,X_F]= L_Y[\La ,F]=[\La , L_YF]$.
Consequently, $\frak{ham}(M,\La)$ is an ideal of $\frak X(M,\La)$.

 From now on we write $F_t=F(t,\cdot)$ for $F\in\cmm$. To any $F\in\ccmm$ one can assign the smooth path $\{t\mt X_{F_t}\}\in\cc(\R,\frak{ham}\m)$ by means of the homomorphism
$\sharp$. Next, by
using (2.5), one assigns to $F$ the isotopy denoted by $\{\phi_F^t\}$ as the unique element of $\mathcal P\diff_c(M)$ corresponding to $\{t\mt X_{F_t}\}$. Let $$\mathcal P\ham\m
=\{\Phi_F=(\{\phi_F^t\},F)|\;F\in\ccmm,\;\phi_F^0=\id\}$$ be the set of all Hamiltonian isotopies of $\m$ starting at the identity. Consequently we have the bijection
\begin{equation}\ccmm\ni F\mt \Phi_F\in\mathcal P\ham\m. \end{equation}
Similarly, for any $\varphi\in\diff\m$, the bijection (2.7) induces the bijection 
\begin{equation}\ccmm\ni F\mt \Phi_F\in\mathcal P_\varphi\ham\m, \end{equation}
 where $\mathcal P_\varphi\ham\m$ is the set of all Hamiltonian isotopies starting at $\varphi$.
  A diffeomorphism $\phi$ is called \emph{Hamiltonian} if there exists a Hamiltonian isotopy
$\Phi_F=(\{\phi_F^t\},F)$ such that $\phi_F^0=\id$ and $\phi_F^1=\phi$. In this situation we write $F\mt\phi$.

By  $\ham\m$ we denote the set of all Hamiltonian diffeomorphisms. Clearly $\ham\m\s\diff_c\m$.

Recall that for $F, H\in\ccmm$ the product $F\#H$, the inverse $\overline{F}$, and the pull-back $f^*F$, where $f\in\diff\m$, are given by the formulae 
\begin{align}
\begin{split}
(F\#H)_t&=F_t+H_t\ci(\phi_F^t)^{-1},\\
 (\overline F)_t&=-F_t\ci\phi_F^t,\\
(f^*F)_t&=F_t\ci f.\\
\end{split}
\end{align}

Now from Lemma 2.1 we derive the following
\begin{lem}
Under the above notation one has
\begin{enumerate}
\item $F\# H$ generates $\{\phi_F^t\ci\phi_H^t\}$ and
$\Phi_{F\#H}=\Phi_F\cdot\Phi_H:=(\{\phi_F^t\ci\phi_H^t\},F\#H)$; \item $\overline F$ generates $\{(\phi_F^t)^{-1}\}$ and $\Phi_{\overline F}=\Phi_F^{-1}:=(\{(\phi_F^t)^{-1}\},\overline F)$;
\item $\Phi_{{\overline F}\#H}=\Phi^{-1}_F\cdot\Phi_H$; \item $\Phi_{f^*F}=(\{f^{-1}\ci\phi_F^t\ci f\},f^*F)$.
\end{enumerate}
In particular, $\mathcal P\ham\m$ is a group.
\end{lem}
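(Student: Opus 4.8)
The plan is to reduce every item to Lemma~2.1, the defining property $\dot\phi_F^t=X_{F_t}$ of the isotopy generated by $F$, and the linearity of $\sh$ (equivalently, of $F\mt X_F$; cf.~(2.4)). Two preliminaries make this work. First, each $\phi_F^t$ is a Poisson automorphism: its generator $X_{F_t}$ lies in $\frak{ham}\m\s\frak X\m$, the space of infinitesimal automorphisms of $\La$, so $\ddt(\phi_F^t)^*\La=(\phi_F^t)^*L_{X_{F_t}}\La=0$ and hence $(\phi_F^t)^*\La=\La$; and every $f\in\diff\m$ preserves $\La$ by definition. Second, specialising (2.8) to a Poisson diffeomorphism $g$ and substituting $H=F\ci g$ (so that $F=H\ci g^{-1}$) yields
\begin{equation*}
g_*X_H=X_{H\ci g^{-1}},\qquad g\in\diff\m,\quad H\in\cm,
\end{equation*}
which, together with Lemma~2.1, is all that the computations below require.

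For (1), Lemma~2.1(1) gives
\begin{equation*}
\dot{\overbrace{\phi_F^t\ci\phi_H^t}}=X_{F_t}+(\phi_F^t)_*X_{H_t}=X_{F_t}+X_{H_t\ci(\phi_F^t)^{-1}}=X_{(F\#H)_t},
\end{equation*}
using the displayed identity with $g=\phi_F^t$, then linearity of $\sh$ and the definition (2.12) of $\#$; since $\{\phi_F^t\ci\phi_H^t\}$ starts at $\id$, the bijectivity of the correspondence (2.5) forces $\phi_{F\#H}^t=\phi_F^t\ci\phi_H^t$, i.e.\ $\Phi_{F\#H}=\Phi_F\cdot\Phi_H$. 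For (2), Lemma~2.1(2) gives $\dot{\overbrace{(\phi_F^t)^{-1}}}=-((\phi_F^t)^{-1})_*X_{F_t}=-X_{F_t\ci\phi_F^t}=X_{(\overline F)_t}$, hence $\phi_{\overline F}^t=(\phi_F^t)^{-1}$; item (3) then follows at once by applying (1) to the pair $(\overline F,H)$ and using (2). For (4), Lemma~2.1(3), with the conjugating diffeomorphism taken to be $f^{-1}\in\diff\m$, gives $\dot{\overbrace{f^{-1}\ci\phi_F^t\ci f}}=(f^{-1})_*X_{F_t}=X_{F_t\ci f}=X_{(f^*F)_t}$, hence $\phi_{f^*F}^t=f^{-1}\ci\phi_F^t\ci f$. (In each case one should also check that the new Hamiltonian lies in $\ccmm$; this is straightforward from (2.12).)

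Finally, for the group property one transports the product along the bijection $\ccmm\ni F\mt\Phi_F$ of (2.10), setting $\Phi_F\cdot\Phi_H:=\Phi_{F\#H}$; by (1) this product has underlying isotopy $\{\phi_F^t\ci\phi_H^t\}$, so the group axioms amount to the identities $(F\#H)\#K=F\#(H\#K)$, $0\#F=F\#0=F$ and $F\#\overline F=\overline F\#F=0$, each of which is a short substitution from (2.12) (using also $\phi_0^t=\id$ and, via (2), $\phi_{\overline F}^t=(\phi_F^t)^{-1}$). Thus $\Phi_0=(\{\id\},0)$ is the unit and $\Phi_{\overline F}$ the inverse of $\Phi_F$. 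I do not expect a genuine obstacle in any of this; the single point deserving attention is that, since $\ccm$ may contain nonzero Casimir functions, the assignment $F\mt\{\phi_F^t\}$ need not be injective, which is exactly why the Hamiltonian is retained as part of the datum $\Phi_F$ and why the identities above have to be verified at the level of Hamiltonians rather than merely at the level of the underlying isotopies.
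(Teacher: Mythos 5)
Your proposal is correct and follows exactly the route the paper intends: the paper gives no written proof beyond the phrase ``from Lemma 2.1 we derive the following,'' and your argument is precisely the fleshed-out version of that derivation, combining Lemma 2.1 with the naturality identity (2.8) (valid since each $\phi_F^t$ and each $f\in\diff\m$ preserves $\La$) and the linearity of $F\mt X_F$. The details, including the remark that the Hamiltonian must be kept as part of the datum because of Casimir functions, all check out.
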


\begin{prop}
 $\ham\m$ is a normal subgroup of $\diff(M,\La )$.
\end{prop}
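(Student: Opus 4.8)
The plan is to verify the two assertions separately: first that $\ham\m$ is a subgroup of $\diff(M,\La)$, and then that it is normal. For the subgroup part, I would use the structure already set up in Lemma~2.2. Given $\phi,\psi\in\ham\m$, choose $F,H\in\ccmm$ with $F\mt\phi$ and $H\mt\psi$; by Lemma~2.2(1) the Hamiltonian $F\# H$ generates the isotopy $\{\phi_F^t\ci\phi_H^t\}$, which starts at the identity and ends at $\phi\ci\psi$, so $\phi\ci\psi\in\ham\m$. Likewise, by Lemma~2.2(2), $\overline F$ generates $\{(\phi_F^t)^{-1}\}$, giving $\phi^{-1}\in\ham\m$. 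This shows $\ham\m$ is a group, and it is contained in $\diff(M,\La)$ because each $X_{F_t}=\sharp(dF_t)$ is an infinitesimal automorphism (as noted after (2.10), $\frak{ham}\m$ consists of infinitesimal automorphisms), hence each time-$t$ flow map preserves $\La$.

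For normality, let $f\in\diff(M,\La)$ and $\phi\in\ham\m$, with $F\mt\phi$. I would invoke Lemma~2.2(4): the Hamiltonian $f^*F$, defined by $(f^*F)_t=F_t\ci f$, generates the isotopy $\{f^{-1}\ci\phi_F^t\ci f\}$. This starts at the identity and ends at $f^{-1}\ci\phi\ci f$, so $f^{-1}\ci\phi\ci f\in\ham\m$, which is exactly the normality condition. The only thing to check is that $f^*F$ is genuinely an admissible Hamiltonian, i.e.\ that $f^*F\in\ccmm$ and that the vector field it generates is indeed $(f^{-1})_*X_{F_t}$; this follows from the Poisson-morphism identity (2.9) applied to $f^{-1}$ (or equivalently (2.8)), together with Lemma~2.1(3).

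The main obstacle, such as it is, is bookkeeping rather than mathematics: one must make sure that composing or conjugating compactly supported isotopies again yields compactly supported ones (immediate, since supports behave well under composition and under pushforward by $f$ with its inverse available), and that the path $t\mt X_{(f^*F)_t}$ really is the one associated to the isotopy $\{f^{-1}\ci\phi_F^t\ci f\}$ via the correspondence (2.7), which is precisely the content of Lemma~2.1(3) combined with (2.9). Once Lemma~2.2 is in hand, the proof is essentially a one-line application of parts (1), (2) and (4) of that lemma, so I expect no real difficulty.
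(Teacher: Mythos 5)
Your proof is correct and follows essentially the same route as the paper, which simply observes that the proposition "follows immediately from Lemma 2.2 and the definition of Hamiltonian diffeomorphism": parts (1), (2) and (4) of that lemma give closure under products, inverses and conjugation by Poisson diffeomorphisms, exactly as you argue. Your additional checks (that Hamiltonian flows preserve $\La$ and that $f^*F$ is an admissible compactly supported Hamiltonian) are sound and merely make explicit what the paper leaves implicit.
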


In fact, this follows immediately from Lemma 2.2 and the definition of Hamiltonian diffeomorphism (\cite{Ry}).

The length of $\Phi_F$ is given by (1.4).
It is important that a reparametrization does not change the isotopy length.
\begin{lem}
Let $\fs:[a,b]\to I$ be a smooth non-decreasing surjection. Then for
any Hamiltonian isotopy $\Phi_F=(\{\f^t\},F)$    one has:
\begin{enumerate}
\item the isotopy $\{\f^{\fs(t)}\}$ is Hamiltonian generated by $F^{\fs}(t,x)=\fs'(t)F(\fs(t),x)$;
\item
$\lea(\Phi_{F^{\fs}})=\lea(\Phi_F)$, if $\Phi_{F^{\fs}}=(\{\f^{\fs(t)}\}_{a\leq t\leq b},F^{\fs})$;
\item For any isotopy $\Phi_F=(\{\f^t_F\},F)$ and for any   $0<\delta<\frac{1}{2}$ there exists an isotopy
$\Phi_{\hat F}=(\{\hat \f^t\},\hat F)\in\mathcal P\ham\m$ for some $\hat F\in\ccmm$ with $\lea(\Phi_{\hat F})
=\lea(\Phi_F)$ such that for all $0\leq t\leq\delta$ one has $\hat\f^t=\f^0_F$  and
$\hat\f^{1-t}=\f^1_F$. 
\end{enumerate}
\end{lem}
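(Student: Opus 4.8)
The plan is to prove (1) by a chain-rule computation identifying the generating vector field of the reparametrized isotopy, then to read off (2) from (1) via a change of variables in the defining integral (1.4), and finally to obtain (3) by applying (1)--(2) to a reparametrizing function that is flat near both endpoints of $I$.

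For (1), I would set $g_t:=\f^{\fs(t)}$ for $t\in[a,b]$; as $\fs$ is a non-decreasing surjection onto $I$ we have $\fs(a)=0$, $\fs(b)=1$. Differentiating in $t$ and inserting $g_t^{-1}=(\f^{\fs(t)})^{-1}$ into the defining formula (2.6) for the generating field gives
\[
\dot g_t=\fs'(t)\,\dot\f^{\fs(t)}=\fs'(t)\,X_{F_{\fs(t)}}.
\]
Since $F\mapsto X_F=\sharp(dF)$ is $\R$-linear, $\fs'(t)X_{F_{\fs(t)}}=X_{\fs'(t)F_{\fs(t)}}=X_{F^{\fs}_t}$; moreover $F^{\fs}\in\ccmm$ because $\fs$ and $F$ are smooth and the $x$-support of $F^{\fs}$ lies in that of $F$. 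By the one-to-one correspondence between isotopies and their generating paths (see (2.5)), $\{g_t\}$ is then the Hamiltonian isotopy generated by $F^{\fs}$, i.e.\ $\Phi_{F^{\fs}}=(\{g_t\},F^{\fs})$ in the sense of (2.10).

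For (2), I would observe that by (1.2), for each fixed $t$,
\[
\pl F^{\fs}(t,\cdot)\plaa=\pl\fs'(t)\,F(\fs(t),\cdot)\plaa=\fs'(t)\,\pl F(\fs(t),\cdot)\plaa,
\]
where pulling the scalar $\fs'(t)\ge 0$ out of $\pl\cdot\plaa$ uses precisely that $\pl cG\plaa=c\,\pl G\plaa$ holds for $c\ge 0$ --- this is where the hypothesis that $\fs$ is non-decreasing enters, and it cannot be weakened. Then by (1.4) and the substitution $s=\fs(t)$,
\[
\lea(\Phi_{F^{\fs}})=\int_a^b\fs'(t)\,\pl F(\fs(t),\cdot)\plaa\,dt=\int_0^1\pl F(s,\cdot)\plaa\,ds=\lea(\Phi_F).
\]
The middle equality is just the fundamental theorem of calculus applied to an antiderivative of the continuous function $s\mapsto\pl F(s,\cdot)\plaa$ (continuity in $s$ holds since $F$ is smooth with $x$-support in a fixed compact set, so its fibrewise max and min depend continuously on $s$); in particular no injectivity of $\fs$ is required.

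For (3), I would take $[a,b]=I$ and fix a smooth non-decreasing surjection $\fs\colon I\to I$ with $\fs\equiv 0$ on $[0,\delta]$ and $\fs\equiv 1$ on $[1-\delta,1]$ --- such an $\fs$ exists for any $0<\delta<\tfrac12$ by the usual construction of smooth functions with prescribed flat pieces. Putting $\hat F:=F^{\fs}$ and $\hat\f^t:=\f^{\fs(t)}_F$, part (1) gives $\Phi_{\hat F}=(\{\hat\f^t\},\hat F)\in\mathcal P\ham\m$ (note $\hat\f^0=\f^0_F=\id$), part (2) gives $\lea(\Phi_{\hat F})=\lea(\Phi_F)$, and for $0\le t\le\delta$ one simply reads off $\hat\f^t=\f^{\fs(t)}_F=\f^0_F$ and $\hat\f^{1-t}=\f^{\fs(1-t)}_F=\f^1_F$. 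I do not expect a genuine obstacle here; the only points that demand care are keeping the sign convention (2.6) and the $\R$-linearity of $F\mapsto X_F$ straight in (1), and remembering in (2) that $\pl\cdot\plaa$ is only positively homogeneous, so the monotonicity of $\fs$ is essential.
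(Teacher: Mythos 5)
Your proof is correct and follows essentially the same route as the paper's: the chain-rule computation identifying $\sharp(d_{(x)}F^{\fs}(t,x))$ as the generator in (1), the positive homogeneity of $\pl\cdot\plaa$ plus the substitution $s=\fs(t)$ in (2), and a reparametrization flat near the endpoints for (3). Your added remarks (justifying the substitution via the fundamental theorem of calculus when $\fs$ is not injective, and flagging that $\pl\cdot\plaa$ is only positively homogeneous) are slightly more careful than the paper's terse computation but do not change the argument.
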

\begin{proof}

 (1) Define $F^{\fs}\in\CC^{\infty}_c([a,b]\t M,\R)$ by $F^{\fs}(t,x)=\fs'(t)F(\fs(t),x)$. Then $F^{\fs}$ corresponds to  $\{\f^{\fs(t)}\}$ by (2.10).
In fact,
\begin{align*}\dot{\overbrace{\f^{\fs(t)}}}(x)&=\fs'(t)\frac{\p
\f^{\tau}}{\p \tau}|_{\tau=\sigma(t)}((\f^{\fs(t)})^{-1}(x))
=\fs'(t)\dot \f^{\fs(t)}(x)\\
& =\fs'(t)\sharp(d_{(x)}(F(\fs(t),x)))
=\sharp(d_{(x)}(\fs'(t)F(\fs(t),x)))\\
&=\sharp(d_{(x)}(F^{\fs}(t,x))),\end{align*}
where $d_{(x)}$ is the differential with respect to $x$.
Now (2) follows from (1) since
\begin{align*}\lea(\Phi_{F^{\fs}})&=\int_a^b\pl
F^{\fs}_t\plaa dt=\int_a^b\fs'(t)\pl
F_{\fs(t)}\plaa dt\\
&=\int_0^1\pl F_t\plaa dt=\lea(\Phi_F).\end{align*} Finally, (1) and (2) imply (3).
\end{proof}

As a consequence of Lemma 2.4 we have an equivalent definition of the Hamiltonian diffeomorphisms. Namely, denote
by $\hat{\mathcal P}_{\f}^{\psi}\ham\m$ the totality of isotopies $\Phi_F=(\{\f^t_F\},F)\in\mathcal P\ham\m$ such that \begin{equation} (\exists\delta>0),(\forall 0\leq t\leq\delta),\; \f_F^t=\f,\; \f_F^{1-t}=\psi.
\end{equation}
Then $\f\in\ham\m$ iff there is $F\in\ccmm$ such that $\Phi_F\in\hat{\mathcal P}_{\id}^\f\ham\m$.
It follows that the concatenation of isotopies can serve in the definition of the multiplication in $\ham\m$.

\section{Generalization of the Hofer metric for Poisson manifolds}

\subsection{A bi-invariant pseudo-metric}
Let $G$ be a group. For a function $\nu:G\r[0,\infty)$ such that
$\nu(e)=0$ consider the following conditions. For any $g,h\in G$
\begin{enumerate}  \item
$\nu(g^{-1})=\nu(g)$; \item $\nu(gh)\leq\nu(g)+\nu(h)$; \item
$\nu(g)>0$ if and only if $g\neq e$;  \item
$\nu(hgh^{-1})=\nu(g)$.
\end{enumerate}
Then $\nu$ is called a \emph{pseudo-norm} (resp. \emph{norm}) if
(1)-(2) (resp. (1)-(3)) are fulfilled. If (3) is not satisfied,
$\nu$ is called \emph{degenerate}. Next, $\nu$ is
\emph{conjugation-invariant} if (4) is satisfied. Conjugation-invariant pseudo-norms correspond bijectively to bi-invariant pseudo-metrics on $G$:
$\varrho(g,h)=\nu(gh^{-1})$ and conversely $\nu(g)=\varrho(g,e)$.

Observe that in condition (4) the element $h$ may belong to some larger group $\tilde G$ such that $G$ is a normal subgroup of $\tilde G$.

Given a Poisson manifold $\m$, by making use of formulae (1.2), (1.3) and (1.4) we define the pseudo-norm $E_H$ by
\begin{equation}E_H(\phi)=\ra(\phi,\id)=\inf\{\l(\Phi_F)|\; \Phi_F\in\mathcal P\ham\m,\; F\mt\phi\}.
\end{equation}
In view of (2.12) and Lemma 2.2 it is easily seen that $E_H$ defined by (3.1) satisfies conditions (1), (2) and (4) above.
Indeed, for all $\Phi_F,\Phi_H\in\mathcal P\ham\m$ we have
\begin{align}\begin{split}
\lea(\Phi_F\cdot\Phi_H)&\leq\lea(\Phi_F)+\lea(\Phi_H),\\
\lea(\Phi_F^{-1})&=\lea(\Phi_F),\\
  \lea(\Phi_{f^*F})&= \lea(\Phi_F), \quad \forall f\in\diff\m.
\end{split}\end{align}
A possible proof of (3) for $E_H$ is a hard problem.

Further, for all $\phi,\psi\in\ham\m$ we put
$$ \ra(\phi,\psi)=E_H(\phi\psi^{-1}).
$$
 In view of (2.11) and Lemma 2.4
 it is easily checked that  
$$ \ra(\phi,\psi)=\inf\{\l(\Phi_F)|\; \Phi_F\in\hat{\mathcal P}^\psi_\phi\ham\m\}, 
$$
that is $\ra$ is determined by isotopies satisfying (2.13).

Concerning the symplectic case the non-degeneracy condition (3) was proved by Hofer in \cite{Hof} for
$M=\R^{2n}$ (see also \cite{HZ}). It was generalized for some other symplectic
manifolds by Polterovich in \cite{Pol}. Finally, the proof  for
all symplectic manifolds was given by Lalonde and McDuff in
\cite{LM}. In all three proofs  Hofer's idea of displacement
energy and hard symplectic methods are in use.

\begin{thm} Let $(M,\o)$ be an arbitrary symplectic manifold. Then
$E_H:\ham(M,\omega)\r[0,\infty)$ is a conjugation-invariant norm, called the \emph{Hofer energy}. Consequently,
$\varrho_H(\f,\psi):=E_H(\f\psi^{-1})$ is a bi-invariant metric,
called the \emph{Hofer metric}.
\end{thm}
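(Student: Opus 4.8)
The plan is to run the classical argument of Hofer, Polterovich and Lalonde--McDuff, reducing everything to an energy--capacity estimate. The pseudo-norm axioms (1), (2) and the conjugation invariance (4) for $E_H$ are already in hand: they are exactly what is recorded in (3.2), which follows from Lemma 2.2 and (2.12) as noted after (3.1). So the whole content of the theorem is the non-degeneracy condition (3): one must show $E_H(\phi)>0$ whenever $\phi\in\ham(M,\omega)$ and $\phi\neq\id$.

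The first step is localization. Given $\phi\neq\id$, choose $p\in M$ with $\phi(p)\neq p$; by continuity of $\phi$ there is, for $r$ small enough, a symplectically embedded ball $B=B(p,r)\subset M$ (a Darboux ball of radius $r$) with $\phi(B)\cap B=\emptyset$, i.e.\ $\phi$ \emph{displaces} $B$. Introduce the displacement energy $e(B)=\inf\{\l(\Phi_F)\mid \Phi_F\in\mathcal P\ham(M,\omega),\ \phi_F^1(B)\cap B=\emptyset\}$. Any isotopy $\Phi_F$ with $F\mt\phi$ is admissible in this infimum, since its time-one map displaces $B$; hence $E_H(\phi)\geq e(B)$, and it suffices to prove $e(B)>0$.

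The core --- and the only genuine obstacle --- is then the energy--capacity inequality $e(B)\geq\tfrac12\,c(B)$ for a symplectic capacity $c$ with $c(B)=\pi r^2>0$ (the Gromov width, or the Hofer--Zehnder capacity of a ball). The localization step and axioms (1), (2), (4) are elementary, but the positivity of $e(B)$ rests on hard symplectic machinery: for $M=\R^{2n}$ it is Hofer's theorem \cite{Hof}, \cite{HZ}, proved via Gromov's pseudoholomorphic curves and the nonsqueezing theorem, or via the Hofer--Zehnder variational principle; Polterovich \cite{Pol} extended it to a class of symplectic manifolds, and Lalonde--McDuff \cite{LM} to all of them, by a suspension/fibration construction that reduces the estimate to a nonsqueezing statement in $M\times\R^2$. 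Granting this, $E_H(\phi)\geq e(B)\geq\tfrac12\pi r^2>0$, which is (3); hence $E_H$ is a conjugation-invariant norm and $\varrho_H(\phi,\psi)=E_H(\phi\psi^{-1})$ a bi-invariant metric. In the write-up I would simply cite \cite{Hof}, \cite{HZ}, \cite{Pol}, \cite{LM} for this inequality rather than reproduce it.
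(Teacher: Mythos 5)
Your proposal matches the paper's treatment: the paper does not prove Theorem 3.1 but, exactly as you do, records the pseudo-norm axioms via (3.2) and defers the non-degeneracy to Hofer \cite{Hof}, \cite{HZ}, Polterovich \cite{Pol} and Lalonde--McDuff \cite{LM}. Your sketch of the standard reduction (displace a Darboux ball, then invoke the energy--capacity inequality) is a correct outline of those cited arguments, so the two approaches coincide.
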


The Hofer metric plays a crucial role in symplectic topology and
various important notions and facts are expressed in terms of it
(see, e.g., \cite{HZ}, \cite{MS}, \cite{MS1}, \cite{Pol1}). The
original proof of its non-degeneracy for $M=\R^{2n}$ is based on
the action principle and a clue role in it is played by the
action spectrum. The Hofer metric is intimately related, on the
one hand, to a capacity $c_0$ (c.f. \cite{HZ}) and hence to
periodic orbits, and on the other hand to the displacement energy.
Of course, the proofs of Theorems 1.1 and 1.3 are based on Theorem 3.1 or on the energy-capacity inequality.

\subsection{Proof of Theorem 1.1}
In view of  (3.2) and a standard reasoning we know that $\ra$ is a bi-invariant pseudo-metric. To show the non-degeneracy of $\ra$, let $\phi\in\ham\m$, $\phi\not=\id$, and let $F\mt\phi$, where
$F\in\ccmm$. Then there exists a proper leaf $L\in\fll$ such that $\phi|_L\not=\id_L$. Denote by $F_L$ the restriction of $F$ to $I\t L$, i.e. $F_L(t,x)=F(t,x)$ for $(t,x)\in I\t L$.
Due to Weinstein's splitting theorem for Poisson manifolds or Dazord's splitting theorem for singular foliations (see, e.g., \cite{Vai}) we get that $F_L\in\CC_c^\infty(I\t L,\R)$.
It follows that $F_L\mt\phi|_L$ since the leaves of $\F_{\LL}$ are preserved by Hamiltonian isotopies. In view of Theorem 3.1 applied to $(L,\sigma_L)$ we have
\begin{equation} 0<\ra^L(\phi|_L,\id)\leq\int_0^1\pl F_L(t,\cdot)\plaa dt\leq \int_0^1\pl F(t,\cdot)\plaa dt,\end{equation}
where $\ra^L$ is the Hofer metric on $\ham(L,\sigma_L)$. Since $F$ is an arbitrary element of $\ccmm$ satisfying $F\mt\phi$, we get $\ra^L(\phi|_L,\id)\leq\ra(\phi,\id)$.
Thus $\ra$ is non-degenerate.

\begin{rem}
 Sun and Zhang in \cite{SZ} claimed that $\ra$ is non-degenerate for all \emph{regular} Poisson manifolds. They used a bit different definition of $\ra$ by using in it the Casimir
functions. In our paper we omit Casimirs in definition (3.1) and clearly this leads to the same $\ra$ (see our definition of Hamiltonian isotopies in 2.2). However, there is
an essential error in the proof of the non-degeneracy of $\ra$ in \cite{SZ}. Namely, in (28) in \cite{SZ} we do not know whether the restriction $\tilde h$ of
a Hamiltonian function $h$ is \emph{compactly supported} on a symplectic leaf $P_{\a_0}$. Consequently estimations (2.8) and (2.9) need not be true unless the properness
of leaves of the induced symplectic foliation is assumed. On the other hand, I do not know any example of a Poisson manifold such that $\rh$ is degenerate (Remark 1.4).

\end{rem}

\subsection{Proof of Corollary 1.2}
We define the displacement energy as in the symplectic case:
$$ E(U)=\inf\{E_H(\phi):\; \phi\in\ham\m,\; \phi(U)\cap U=\emptyset\}.
$$
If there is no $\phi$ that displaces $U$ we  put $E(U)=+\infty$. Observe that $E(U)\leq E(V)$ whenever $U\s V$, and $E(\phi(U))=E(U)$ for all $\phi\in\ham\m$.

 Let $L\in\F_{\La}$ be a proper leaf and let $E_L$ denote the displacement energy for the symplectic manifold $L$. Clearly we have
\begin{equation}
E_L(U\cap L)\leq E(U)
\end{equation}
for all open subsets $U$ of $M$, since $\phi|_L$ is a Hamiltonian diffeomorphism of $L$ (see the proof of 1.1) and  $\phi|_L$ displaces $U\cap L$ whenever $\phi$ displaces $U$ (here we assume that the empty set is displaced by
 $\id$). Now fix an open ball $U$ in $M$. For any proper leaf $L$ such that $U\cap L$ is an open ball in $L$  the energy-capacity inequality for $L$,  see, e.g., \cite{MS}, p. 377, takes the form
\begin{equation} \frac{1}{2}c_L(U\cap L)\leq E_L(U\cap L), \end{equation}
where $c_L$ is the Gromov width for $L$. Combining  (3.4) and (3.5) we get
\begin{equation*}
\frac{1}{2}c_L(U\cap L)\leq E(U).
\end{equation*}
Thus the inequality (1.6) holds. 

To show the second assertion, note that for an open subset $U$ of $M$ there is, by assumption, a proper leaf $L$ such that $U\cap L\neq\emptyset$. It follows the existence of an open ball
$V\s U$ such that $V\cap L$ is an open ball in $L$. Therefore, in view of (1.6) for $V$, one has
$$
0<\frac{1}{2}c_L(V\cap L)\leq\frac{1}{2}c_{\La}(V)\leq E(V)\leq E(U),$$
as required.

\section{Integrability of Poisson manifolds}

\subsection{Symplectic groupoids and the integrability of Poisson manifolds}
Let $\gg=(\gg\rr M, \ss, \tt, \mm, \ii)$ be a Lie groupoid, where $M$ is the space of units, $\ss, \tt:\gg\to M$ are the source and target maps, $\mm:\gg_2\to \gg$ is the multiplication, where
$\gg_2=\{(x,y)\in \gg\t \gg:\ss(x)=\tt(y)\}$,   and  $\ii:\gg\to \gg$ is the inversion. Here $M$ as well as the fibers of $\ss$ and $\tt$ are
Hausdorff while $\gg$ is possibly non-Hausdorff. This is motivated by important examples, e.g. that of the fundamental groupoid of some kind of foliations (cf. \cite{CDW}, \cite{CF3}).

A Lie algebroid is called \emph{integrable} if it is the Lie algebroid of a Lie groupoid.
For any Lie algebroid $A$, Crainic and Fernandes constructed in \cite{CF1}  a topological groupoid $\mathcal G(A)$ and proved a criterion for the integrability of Lie algebroids.
This criterion says that $A$ is integrable if and only if the groupoid $\mathcal G(A)$ is a Lie groupoid. Equivalently,  $A$ is integrable if and only if the monodromy groups $\mathcal N_x(A)$,
where $x\in M$, are locally uniformly discrete. In this case, $\mathcal G(A)$ is the unique $\ss$-simply connected Lie groupoid integrating $A$.   

\begin{dff}
 A Lie groupoid $\gg\rr M$ equipped with a symplectic
form $\o$ is called \emph{symplectic} if the graph of multiplication
$\graph(\mm)$ is a Lagrangian submanifold of $(-\gg)\t\gg\t\gg$.
Here $-\gg$ denotes the symplectic manifold $(\gg,-\o)$. Similarly as for the Lie groupoids it is assumed that $M$ and the fibers of $\ss$ and $\tt$ are Hausdorff, but
$\gg$ itself is possibly non-Hausdorff. 
\end{dff}

The non-Hausdorff assumption is motivated by important examples of non-Hausdorff symplectic groupoids (cf. \cite{CF2}). However,  in our Theorem 1.3 we need the Hausdorff integrability, and all symplectic groupoids
which appear in examples in the present paper are Hausdorff. The reason that we need $\gg$ to be Hausdorff is the following. Although, in the case of a non-Hausdorf $\gg$,  the lifted Hamiltonian $\ss^*F$, where $F\in\ccmm$, 
in the proof of Theorem 1.3 (below) yields a well-defined flow which lies on the $\tt$-fibers, the modified Hamiltonian $F^{\lambda}$ on $\gg$ possibly does not.
\begin{prop} (cf. \cite{We1}, \cite{CDW}, \cite{CF2})
Let $(\gg, \omega)$ be a symplectic groupoid. Then we have:

\begin{enumerate}
\item The inversion $\ii$ is an antisymplectomorphism (i.e. $\ii^*\o=-\o$),
and $M$ is a Lagrangian submanifold of $\gg$.

\item The foliations by fibers of $\ss$ and of $\tt$ are $\o$-orthogonal.

\item The smooth functions on $\gg$ constant on $\ss$-fibers  and  the smooth functions on $\gg$ constant on $\tt$-fibers  commute.

\item   If $\gg$ is $\ss$-connected the   symplectic foliation $\F_{\La}$ on $M$ coincides with the foliation on $M$ induced by the groupoid structure.

\item The space of units $M$ admits a natural Poisson structure $\La$ such that  $\ss$ (resp. $\tt$) is a Poisson morphism (resp. anti-morphism) of $(\gg,\o)$ onto $(M,\La)$.

\item The Lie algebroid of $\gg$ is canonically isomorphic with the Lie algebroid $(T^*M, [\cdot,\cdot], \sharp_{\La})$ induced by $\m$, where $[\cdot,\cdot]$ is the bracket
 of 1-forms  induced by the Poisson structure $\La$.
 \end{enumerate}
\end{prop}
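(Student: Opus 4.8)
The plan is to deduce all six assertions from the single hypothesis that, with the convention $\graph(\mm)=\{(\mm(g,h),g,h):\ss(g)=\tt(h)\}$, this graph is Lagrangian in $(-\gg)\t\gg\t\gg$; here I write $\epsilon\colon M\to\gg$ for the unit inclusion. First I would record the dimension identity: since $\graph(\mm)\cong\gg_2$ has dimension $2\dim\gg-\dim M$ while a Lagrangian in $\gg^3$ has dimension $\tfrac32\dim\gg$, equating the two forces $\dim\gg=2\dim M$. For (1), note that $(\epsilon u,\epsilon u,\epsilon u)\in\graph(\mm)$ for each unit $u$, so pulling $(-\o)\oplus\o\oplus\o$ back along $u\mapsto(\epsilon u,\epsilon u,\epsilon u)$ and using that $\graph(\mm)$ is isotropic gives $\epsilon^*\o=0$; hence $M$ is isotropic and, by the dimension count, Lagrangian. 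Likewise $(\epsilon(\tt x),x,\ii x)\in\graph(\mm)$ because $\mm(x,\ii x)=\epsilon(\tt x)$, and pulling back along $x\mapsto(\epsilon(\tt x),x,\ii x)$ while using $\epsilon^*\o=0$ yields $\o+\ii^*\o=0$, i.e. $\ii^*\o=-\o$.

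For (2) I would compute inside the single tangent space $T_x\gg$ via the factorization $x=\mm(\epsilon(\tt x),x)$. At the graph point $(x,\epsilon(\tt x),x)$ the tangent space to $\graph(\mm)$ contains $V_1=(\dot b,0,\dot b)$ with $\dot b\in\ker d\tt_x$ (since left translation by a unit is the identity on the $\tt$-fibre, so $d\mm(0,\dot b)=\dot b$) and $V_2=(dR_x\dot a,\dot a,0)$ with $\dot a\in\ker d\ss_{\epsilon(\tt x)}$, where $R_x$ is right translation. Isotropy forces $\o_x(\dot b,dR_x\dot a)=0$, and since $dR_x$ maps $\ker d\ss_{\epsilon(\tt x)}$ isomorphically onto $\ker d\ss_x$ this reads $\o_x(\ker d\tt_x,\ker d\ss_x)=0$; as each subspace has dimension $\dim M=\tfrac12\dim\gg$, they are mutual $\o$-orthogonal complements, which is (2). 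Then (3) is immediate: if $F$ is constant on $\ss$-fibres then $dF$ annihilates $\ker d\ss$, so $X_F\in(\ker d\ss)^{\o}=\ker d\tt$; if moreover $G$ is constant on $\tt$-fibres, then $\{F,G\}=dG(X_F)=0$ since $dG$ annihilates $\ker d\tt$.

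For (5) I would define $\La$ on $M$ by declaring $\ss$ to be a Poisson map. Given $f,g\in\cm$, the function $\{\ss^*f,\ss^*g\}_{\o}$ is again constant on $\ss$-fibres: indeed $X_{\ss^*f},X_{\ss^*g}\in\ker d\tt$ by the computation just made, and $\ker d\tt$ is involutive, so $X_{\{\ss^*f,\ss^*g\}}=[X_{\ss^*f},X_{\ss^*g}]\in\ker d\tt$, which is exactly the condition characterizing $\ss$-basic functions. Hence $\{\ss^*f,\ss^*g\}_{\o}=\ss^*\{f,g\}_{\La}$ for a unique $\{f,g\}_{\La}\in\cm$, and bilinearity, antisymmetry, Leibniz and Jacobi descend from the corresponding properties of $\o$. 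That $\tt$ is an anti-morphism I would extract from (1): since $\ss\ci\ii=\tt$ and $\ii^*\o=-\o$, the inversion $\ii$ is anti-Poisson on $(\gg,\o)$, whence $\{\tt^*f,\tt^*g\}_{\o}=\{\ii^*\ss^*f,\ii^*\ss^*g\}_{\o}=-\ii^*\ss^*\{f,g\}_{\La}=-\tt^*\{f,g\}_{\La}$.

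Finally, for (6) I would take the standard model $A=\ker d\ss|_M$ of the Lie algebroid of $\gg$ with anchor $d\tt|_A$, and send $v\in\ker d\ss_u$ to $\iota_v\o|_{T_uM}\in T_u^*M$; this is an isomorphism because $T_uM$ and $\ker d\ss_u$ are transverse Lagrangians by (1) and (2), so $\o$ pairs them perfectly, and one checks $d\tt|_A$ corresponds to $\sharp_{\La}$. Assertion (4) then follows, for the image of $\sharp_{\La}$ is the distribution tangent to the orbits $\tt(\ss^{-1}(u))$, so when $\gg$ is $\ss$-connected these orbits are precisely the connected integral manifolds of $\sharp_{\La}(T^*M)$, i.e. the leaves of $\F_{\La}$. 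I expect the genuine obstacle to sit in the bracket-matching of (6): one must unwind the algebroid bracket of $A$ (defined through right-invariant vector fields) and the Koszul bracket $[\a,\b]=L_{\sharp\a}\b-L_{\sharp\b}\a-d(\La(\a,\b))$ of $1$-forms, and verify that the multiplicativity of $\o$ is exactly what reconciles them; the other steps are either linear algebra along $M$ or formal consequences of $\o$ being symplectic. This is the classical computation of \cite{We1}, \cite{CDW}, which I would cite rather than reproduce.
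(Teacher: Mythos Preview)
The paper does not prove this proposition at all; it simply records the six assertions and cites \cite{We1}, \cite{CDW}, \cite{CF2} for proofs. Your plan therefore goes well beyond what the paper does, and it is essentially the classical argument of Coste--Dazord--Weinstein: extract everything from the single Lagrangian condition on $\graph(\mm)$ via well-chosen embeddings of $M$ and $\gg$ into $\graph(\mm)$, then build $\La$ by pushing the bracket down along $\ss$. The overall architecture is correct.

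Two small points are worth tightening. First, in your treatment of (6) you call $T_uM$ and $\ker d\ss_u$ ``transverse Lagrangians''. Only $T_uM$ is Lagrangian; in general $(\ker d\ss_u)^{\o}=\ker d\tt_u\neq\ker d\ss_u$, so $\ker d\ss_u$ need not even be isotropic (already in the pair groupoid $M\times\overline M$ it is not). Your isomorphism $v\mapsto\iota_v\o|_{T_uM}$ is nevertheless correct, since its kernel is $\ker d\ss_u\cap(T_uM)^{\o}=\ker d\ss_u\cap T_uM=0$ by transversality and the Lagrangian property of $T_uM$ alone; just adjust the justification. Second, in (5) you assert that $X_H\in\ker d\tt$ ``is exactly the condition characterizing $\ss$-basic functions''. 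What it characterizes is $dH|_{\ker d\ss}=0$, i.e.\ $H$ locally constant on $\ss$-fibres; to conclude $H$ is genuinely $\ss$-basic you need the $\ss$-fibres to be connected. In the paper's setting this is harmless (the proof of Theorem~1.3 explicitly passes to an $\ss$-connected integration), but you should state the hypothesis or instead define $\La$ directly via $\sharp_{\La}=d\tt\circ(\text{isomorphism }T^*M\cong\ker d\ss|_M)$ and then verify $\ss$ is Poisson, which sidesteps the issue. With these two corrections your sketch matches the argument in \cite{CDW}.
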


The basic examples of symplectic groupoids are the following.
\begin{exa}
\begin{enumerate}

\item Given a symplectic manifold $(M,\o)$, the pair groupoid $\gg=M\t M$  with the symplectic form $(-\o)\oplus\o$ is a
symplectic groupoid. Then the group of (global) bisections of $\gg$ coincides with the symplectomorphism group $\symp(M,\o)$.

\item If $M$ is a manifold then $T^*M$, where the multiplication $\mm$ is the addition in
fibers and $\pi_M=\ss=\tt$, is a Lie groupoid with the fiberwise multiplication. $T^*M$ endowed
with the canonical symplectic form $\o_M=d\la_M$ is also a symplectic
groupoid. In fact, the graph of $\mm$
$$
\graph(\mm)=\{(x_3,x_2,x_1):\, x_1+x_2-x_3=0\}.
$$
 is the image of $\mathcal N\Delta_M$, the normal bundle  of the diagonal
$\Delta_M\s M^3$ in $(T^*M)^3$, into $(T^*M)^3$ by the mapping
$(x_3,x_2,x_1)\mt(-x_3,x_2,x_1)$. Notice that $\mathcal N\Delta_M$ is
Lagrangian in $(T^*M)^3$, and the mapping is symplectic. So $T^*M$
is indeed a symplectic groupoid.

\item Let $\gg\rightrightarrows M$ be a Lie groupoid. First observe that the tangent space  $T\gg=(T\gg
\rightrightarrows TM, T\ss, T\tt, \oplus, I)$
carries a structure of Lie groupoid. Here the multiplication $\oplus$
is given by
$$
X\oplus Y=(\ddt(x(t).y(t))|_{t=0},
$$
where $X={dx\over dt}|_{t=0}$, $Y={d y\over d t}|_{t=0}$, $\ss(x(t))=\tt(y(t))$,
and the inversion $IX={d x(t)^{-1}\over dt}|_{t=0}$ if $X={dx\over dt}|_{t=0}$.

Next the cotangent space $T^*\gg$ equipped with 
 $\o_{\gg}=d\lambda_{\gg}$ carries a structure of symplectic groupoid with $\mathcal N^*M$,
the conormal bundle of $M$ in $\gg$, being the space of units.
Here the multiplication, denoted also by $\oplus$, is determined by the equality
$$
\langle\xi\oplus\eta,X\oplus Y\rangle=\langle\xi,X\rangle+\langle\eta,Y\rangle,\quad\hbox{for}\quad X,Y\in T\gg,\,\xi, \eta\in T^*\gg,
$$
where $\langle\cdot,\cdot\rangle$ is the canonical pairing.
Furthermore, the canonical projection $p:T^*\gg\r\gg$ is an epimorphism
of groupoids. Obviously, if $\gg$ is Hausdorff then so are $T\gg$ and $T^*\gg$.

\item If $G$ is a Lie group, $T^*G$ admits two symplectic groupoid
structures. The first one is given as above, and the second is the
structure of the transformation groupoid, where $G$ acts on $\frak g^*$
by the coadjoint action ($\frak g$ is the Lie algebra of $G$). Since these structures fulfill a compatibility
condition, $T^*G$ carries a structure of \emph{ double groupoid}, cf.\cite{CDW}, \cite{Mk}.
\end{enumerate}
\end{exa}

\begin{dff}
A Poisson manifold is called \emph{integrable} (resp. \emph{Hausdorff integrable}) if it can be represented as the space of units of a (resp. Hausdorff) symplectic groupoid. 
\end{dff}

By using their own integrability criterion for Lie algebroids, mentioned above, Crainic and Fernandes proved in \cite{CF2} an integrability criterion for Poisson manifolds.
Namely, they proved the following 

\begin{thm} (\cite{CF2}, Theorem 2; \cite{CF3}, Theorem 5.15) For any Poisson manifold $\m$ the following conditions are equivalent:
\begin{enumerate}

\item $\m$ is  integrable.
\item  The  Lie algebroid $(T^*M, [\cdot,\cdot], \sharp_{\La})$ is integrable.
\item The groupoid $\Sigma(M)$ is a  Lie groupoid, where  $\Sigma(M):=\mathcal G(T^*M)$.
\item The monodromy groups $\mathcal N_x(T^*M)$, with $x\in M$, are locally uniformly discrete.
\end{enumerate}
In this case, $\Sigma(M)$ is the unique $\ss$-simply connected Lie groupoid which integrates $\m$.   
\end{thm}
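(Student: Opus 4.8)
The plan is to reduce the four-fold equivalence to a single genuinely Poisson-geometric statement, the equivalence of (1) and (2), and to read off the remaining equivalences together with the uniqueness clause directly from the general integrability criterion for Lie algebroids. Indeed, conditions (2), (3), (4), as well as the concluding uniqueness assertion, are nothing but the Crainic--Fernandes criterion for the abstract Lie algebroid $A=(T^*M,[\cdot,\cdot],\sharp_{\La})$ recalled in the paragraph preceding Definition 4.1: a Lie algebroid $A$ is integrable if and only if $\mathcal G(A)$ is a Lie groupoid, if and only if the monodromy groups $\mathcal N_x(A)$ are locally uniformly discrete, and in that case $\mathcal G(A)$ is the unique $\ss$-simply connected Lie groupoid integrating $A$. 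Specializing $A$ to the cotangent algebroid of $\m$ and writing $\Sigma(M)=\mathcal G(T^*M)$, this yields at once (2) $\Leftrightarrow$ (3) $\Leftrightarrow$ (4) and identifies $\Sigma(M)$ as the unique $\ss$-simply connected integration of $A$. Thus everything hinges on (1) $\Leftrightarrow$ (2).

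The implication (1) $\Rightarrow$ (2) is immediate. If $\m$ is represented as the unit space of a symplectic groupoid $(\gg,\o)$, then by Proposition 4.2(6) the Lie algebroid of $\gg$ is canonically isomorphic to $(T^*M,[\cdot,\cdot],\sharp_{\La})$; hence the cotangent algebroid is the Lie algebroid of a Lie groupoid, and is therefore integrable.

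The substantial direction is (2) $\Rightarrow$ (1). Assuming $A=T^*M$ integrable, I would take its $\ss$-simply connected integration $\Sigma(M)=\mathcal G(T^*M)$, which exists as a Lie groupoid by the criterion, and equip it with a multiplicative symplectic form $\o$ whose unit space recovers $\m$. The construction I would follow realizes $\mathcal G(A)$ as the quotient of the Banach manifold of cotangent paths (the $A$-paths for $A=T^*M$) by $A$-homotopy; for the cotangent algebroid this is exactly the reduced phase space of the Poisson sigma model. On the path space there is a canonical weak symplectic form, and $A$-homotopy is the null-foliation of its restriction, so $\mathcal G(A)$ inherits a form $\o$ by symplectic reduction. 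One then checks, step by step, that $\o$ is: first, closed and nondegenerate on the finite-dimensional quotient $\Sigma(M)$, the nondegeneracy using that $\sharp_{\La}$ is the anchor so that the $\ss$- and $\tt$-fibres are $\o$-orthogonal of complementary dimension; second, multiplicative, i.e. $\graph(\mm)$ is Lagrangian in $(-\gg)\t\gg\t\gg$, which follows from the compatibility of concatenation of $A$-paths with the reduction; and third, such that the induced Poisson structure on the unit space, via Proposition 4.2(5), is the original $\La$. Together these show that $(\Sigma(M),\o)$ is a symplectic groupoid with unit space $\m$, giving (1).

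The main obstacle is precisely (2) $\Rightarrow$ (1), and within it the verification that the reduced form $\o$ is nondegenerate and multiplicative: nondegeneracy is delicate because the construction lives a priori on an infinite-dimensional path space and descends to a finite-dimensional quotient only once the monodromy condition (4) guarantees smoothness of $\Sigma(M)$, while multiplicativity requires care in matching the groupoid product with concatenation of cotangent paths. By contrast, the equivalences (2) $\Leftrightarrow$ (3) $\Leftrightarrow$ (4) and the uniqueness statement are formal consequences of the cited general algebroid criterion and demand no new work.
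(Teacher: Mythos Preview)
The paper does not supply its own proof of this theorem: it is quoted verbatim as a result of Crainic and Fernandes (\cite{CF2}, Theorem~2; \cite{CF3}, Theorem~5.15), with only the one-sentence remark that it follows from ``their own integrability criterion for Lie algebroids'' applied to the cotangent algebroid. Your sketch is therefore not competing with any argument in this paper; rather, you have reconstructed the architecture of the Crainic--Fernandes proof itself, and done so accurately: the equivalences (2)$\Leftrightarrow$(3)$\Leftrightarrow$(4) and the uniqueness clause are indeed the abstract algebroid criterion specialized to $A=T^*M$; (1)$\Rightarrow$(2) is exactly Proposition~4.2(6); and (2)$\Rightarrow$(1), the only substantive step, is handled in \cite{CF2} precisely by equipping $\Sigma(M)=\mathcal G(T^*M)$ with a multiplicative symplectic form obtained via symplectic reduction of the cotangent path space (the phase space of the Poisson sigma model), with the delicate points being nondegeneracy and multiplicativity of the reduced form, just as you identify.

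So your proposal is correct and matches the approach of the cited sources; the only thing to note is that, for the purposes of this paper, no proof is expected at all---the statement is simply invoked as background.
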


However, this criterion does not concern the Hausdorff integrability and it seems that it would be difficult to extract a possible characterization of the
 Hausdorff integrability from the proof of Theorem 2 in \cite{CF2}.

A \emph{symplectic realization} of  a Poisson manifold  $(M,\La)$ is a Poisson morphism $\mu:\Sigma\to M$, where $\Sigma$ is a symplectic manifold, such that $\mu$ is a surjective submersion. Similarly as in Definition 4.1, $\Sigma$ is possibly non-Hausdorff, but the leaves of the two foliations on $\Sigma$ induced by $\mu$, that is $\mu^{-1}(x)$ and $\mu^{-1}(x)^\perp$ for $x\in M$, are assumed to be Hausdorff.
In particular, the notion of the completeness of the vector field $X_{\mu^*H}$, where $H\in\cm$, makes sense. Next,
a symplectic realization $\mu:\Sigma\to M$ is called
\emph{complete} if for any complete Hamiltonian vector field $X_H$ on $M$ with $H\in\cm$, the vector field $X_{\mu*H}$ is complete.

A classical result due to Karasev and Weinstein (cf. \cite{CF2}, Theorem 7) states that any Poisson manifold admits a Hausdorff symplectic realization. Since the paper by Crainic and Fernandes \cite{CF2}
we know that the concept of integrability of Poisson manifolds can be expressed in terms of possessing a complete symplectic realization.
Namely, Theorem 8 in \cite{CF2} says that a  Poisson manifold is integrable if and only if it admits a complete symplectic realization. It is straightforward from
an easier part of the proof of this theorem that the following holds.

\begin{prop}
If $(\gg\rr M, \ss, \tt)$ is a symplectic groupoid then its source map $\ss:\gg\to M$ is a complete symplectic realization of $M$.
\end{prop}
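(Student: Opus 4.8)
The plan is to verify, in order, that $\ss$ meets the definition of a symplectic realization, and then that the $\ss$-lift of every complete Hamiltonian vector field on $M$ is complete; the second part will be a bootstrap argument that makes essential use of the groupoid multiplication. For the first part, $\ss$ is a submersion by the definition of a Lie groupoid and is surjective because $\ss(1_x)=x$ for every $x\in M$, and it is a Poisson morphism onto $\m$ by Proposition 4.2(5); so I only have to identify the two foliations $\ss^{-1}(x)$ and $\ss^{-1}(x)^{\perp}$ on $\gg$ and to check that their leaves are Hausdorff. Here I would use that $M$ is Lagrangian in $\gg$ (Proposition 4.2(1)), so that $\dim\gg=2\dim M$ and every $\ss$-fibre and $\tt$-fibre has dimension $\frac12\dim\gg$; combined with the $\o$-orthogonality of $\ss$-fibres and $\tt$-fibres (Proposition 4.2(2)), a dimension count shows that the $\o$-orthogonal complement of an $\ss$-fibre is exactly the $\tt$-fibre through that point. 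Thus $\ss^{-1}(x)^{\perp}$ is the foliation by $\tt$-fibres, both foliations have Hausdorff leaves by the standing assumption of Definition 4.1, and $\ss$ is a symplectic realization of $\m$.

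For the completeness, fix $H\in\cm$ with $X_H$ complete, let $\{a_t\}_{t\in\R}$ denote its flow, and put $\xi:=X_{\ss^*H}$ on $\gg$. I would first record two facts: $\ss_*\xi=X_H$, which is exactly (2.8) since $\ss$ is a Poisson morphism; and $\xi$ is tangent to the $\tt$-fibres, since $\ss^*H$ is constant on the $\ss$-fibres and hence Poisson-commutes, by Proposition 4.2(3), with every $\tt$-basic function $\tt^*G$, $G\in\cm$. Consequently every integral curve $\gamma$ of $\xi$ remains inside one $\tt$-fibre, and $\ss\circ\gamma$ is an integral curve of $X_H$, in particular defined for all $t$. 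I would then extract from the Lagrangian condition on $\graph(\mm)\s(-\gg)\t\gg\t\gg$ the left-equivariance of the flow of $\xi$: the separable function $(z,g_1,g_2)\mt\ss^*H(g_2)-\ss^*H(z)$ vanishes on $\graph(\mm)$ because $\ss(g_1g_2)=\ss(g_2)$, its Hamiltonian vector field with respect to $(-\o)\oplus\o\oplus\o$ is $\xi\oplus 0\oplus\xi$, and --- since a Hamiltonian vector field is tangent to a Lagrangian submanifold on which its Hamiltonian is constant --- integration yields $\Psi_t(g_1g_2)=g_1\cdot\Psi_t(g_2)$ for the flow $\Psi_t$ of $\xi$, whenever $g_1g_2$ is defined. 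Equivalently, each left translation $L_g:\tt^{-1}(\ss(g))\to\tt^{-1}(\tt(g))$ is a diffeomorphism carrying $\xi$ on one $\tt$-fibre to $\xi$ on the other, so the maximal interval of existence of the $\xi$-integral curve through $g$ coincides with that of the $\xi$-integral curve through the unit $1_{\ss(g)}$; hence it is enough to treat integral curves issued from units.

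Now the bootstrap. Let $\gamma$ be the integral curve with $\gamma(0)=1_x$, defined on its maximal interval $(t_-,t_+)$; then $\tt(\gamma(t))\equiv x$ and $\ss(\gamma(t))=a_t(x)$. For $0\le t_0<t_+$, applying the left-equivariance at $\gamma(t_0)$ (whose source is $a_{t_0}(x)$, and for which $L_{\gamma(t_0)}$ is a diffeomorphism) gives $\gamma(t_0+s)=\gamma(t_0)\cdot\delta(s)$, where $\delta$ is the integral curve through $1_{a_{t_0}(x)}$, so that $t_+\ge t_0+\tau(a_{t_0}(x))$, where $\tau(z)$ denotes the forward lifespan of the $\xi$-integral curve through $1_z$. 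If $t_+$ were finite, then $K:=\{a_t(x):0\le t\le t_+\}$ would be compact, $\tau$ is lower semicontinuous and strictly positive (standard ODE theory, together with continuity of the unit inclusion), so that $\ve:=\min_{z\in K}\tau(z)>0$ and $t_+\ge t_0+\ve$ for every $t_0\in[0,t_+)$, which is absurd. Hence $t_+=+\infty$, and the same argument applied to $-H$ (whose Hamiltonian field is $-X_H$, again complete) gives $t_-=-\infty$. Therefore $\xi=X_{\ss^*H}$ is complete and, $H$ being arbitrary, $\ss$ is a complete symplectic realization of $\m$. (The $\xi$-integral curves involved all lie in the $\tt$-fibres, which are Hausdorff, so the possible non-Hausdorffness of $\gg$ itself causes no trouble.)

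I expect the bootstrap to be the main obstacle. Completeness of the projected curve $t\mapsto a_t(x)$ does not by itself prevent $\gamma$ from escaping to infinity inside a $\tt$-fibre, so one genuinely needs the groupoid multiplication --- precisely the fact that left translations are diffeomorphisms between $\tt$-fibres --- together with lower semicontinuity of the lifespan and compactness of the trajectory over a bounded time interval; this is where the integrability is really used. The other point requiring care, passing from the Lagrangian condition on $\graph(\mm)$ to the equivariance $\Psi_t(g_1g_2)=g_1\Psi_t(g_2)$, is the standard computation that the Hamiltonian vector field of a separable function on a product symplectic manifold splits accordingly, combined with the tangency criterion for Lagrangian submanifolds.
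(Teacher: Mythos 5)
Your argument is correct, and it is worth noting that the paper does not actually prove this proposition: it is dispatched with the single remark that it is ``straightforward from an easier part of the proof'' of Theorem 8 in \cite{CF2}. What you have written out is, in substance, the standard argument from \cite{CDW} and \cite{CF2}: the identification of $\ss^{-1}(x)^{\perp}$ with the $\tt$-fibres, the extraction of left-invariance of $\xi=X_{\ss^*H}$ from the Lagrangian condition on $\graph(\mm)$, the reduction to integral curves issued from units, and the bootstrap along the complete base flow, which is indeed the only genuinely non-formal step. Two small points deserve polishing. First, rather than arguing that the integral curve of $\xi\oplus 0\oplus\xi$ \emph{stays} in $\graph(\mm)$ (which, since $\graph(\mm)$ need not be closed in $\gg^3$ when $\gg$ is non-Hausdorff, follows only locally without further comment), it is cleaner to read off the pointwise identity $dL_{g_1}(\xi(g_2))=\xi(g_1g_2)$ directly from tangency at the single point $(g_1g_2,g_1,g_2)$, using $T\graph(\mm)=\{(d\mm(v_1,v_2),v_1,v_2)\}$ and $d\mm(0_{g_1},v_2)=dL_{g_1}v_2$ for $v_2$ tangent to the $\tt$-fibre; the flow identity and the equality of lifespans then follow from uniqueness of integral curves inside the Hausdorff $\tt$-fibres. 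Second, the lower semicontinuity and positivity of the lifespan $\tau$ on a possibly non-Hausdorff $\gg$ should be justified by exactly the remark you make in parentheses: maximal integral curves are unique because they lie in Hausdorff $\tt$-fibres, and the continuous-dependence estimates are carried out in finitely many (Hausdorff) charts along the compact reference trajectory. With these two glosses the proof is complete and self-contained, and it supplies the detail that the paper leaves to the reference.
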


As an obvious consequence of the proof of Theorem 1.3 below we get
\begin{cor}
If a Poisson manifold $M$ admits a complete Hausdorff symplectic realization $\mu:\Sigma\to M$  then $\rh$ is non-degenerate. In particular, this is the case whenever $\Sigma$
is compact and Hausdorff.
\end{cor}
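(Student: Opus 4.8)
The proof is a variant of that of Theorem 1.3, the point being that the source map used there enters only through the fact that it is a complete Hausdorff symplectic realization. So I would take a complete Hausdorff symplectic realization $\mu:(\Sigma,\o_\Sigma)\to\m$, a Hamiltonian diffeomorphism $\phi\in\ham\m$ with $\phi\neq\id$, a point $x_0\in M$ with $\phi(x_0)\neq x_0$, and a lift $z_0\in\mu^{-1}(x_0)$. I then fix an open ball $V_0\ni x_0$ in $M$ with $\overline{V_0}$ compact and $\phi(\overline{V_0})\cap\overline{V_0}=\emptyset$ (possible since $\phi(x_0)\neq x_0$), together with a small open ball $U\ni z_0$ lying in a Darboux chart of $(\Sigma,\o_\Sigma)$ and with $\mu(\overline U)\s V_0$. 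The plan is to prove the estimate $\ra(\phi,\id)\ge\tfrac12\,c_\Sigma(U)$, where $c_\Sigma(U)>0$ is the Gromov width of $U$ in $\Sigma$; since $U$ is fixed \emph{before} choosing any Hamiltonian realizing $\phi$, this uniform lower bound will give the non-degeneracy of $\rh=\ra$.

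Given $F\in\ccmm$ with $F\mt\phi$, completeness of $\mu$ provides the globally defined flow $\{\psi_F^t\}$ of the time-dependent vector field $\{X_{\mu^*F_t}\}$ on $\Sigma$, and, $\mu$ being a Poisson morphism, $\mu_*X_{\mu^*F_t}=X_{F_t}$, whence $\mu\ci\psi_F^t=\phi_F^t\ci\mu$ for $0\le t\le1$. The trajectory tube $K=\{\psi_F^t(z):z\in\overline U,\ 0\le t\le1\}$ is compact — here Hausdorffness of $\Sigma$ is used — so I pick $\lambda\in\CC^\infty_c(\Sigma,[0,1])$ with $\lambda\equiv1$ on an open neighbourhood $W$ of $K$ and set $F^\lambda_t:=\lambda\cdot\mu^*\!\bigl(F_t-\min_M F_t\bigr)\in\CC^\infty_c(I\t\Sigma,\R)$. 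On $W$ the constant $\min_M F_t$ drops out, so $X_{F^\lambda_t}=X_{\mu^*F_t}$ there; since the $\psi_F$-trajectory from any $z\in\overline U$ stays in $K\s W$ for $t\in[0,1]$, uniqueness of solutions gives $\psi_{F^\lambda}^t|_{\overline U}=\psi_F^t|_{\overline U}$ on $[0,1]$. In particular the compactly supported Hamiltonian diffeomorphism $\Psi:=\psi_{F^\lambda}^1$ of $(\Sigma,\o_\Sigma)$ agrees with $\psi_F^1$ on $U$.

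Two observations then finish the argument. First, $\Psi$ displaces $U$: indeed $\mu(\Psi(U))=\phi(\mu(U))\s\phi(V_0)$, which is disjoint from $V_0\supseteq\mu(U)$, so $\Psi(U)\cap U=\emptyset$; crucially this holds for \emph{every} $F$. Second, $\mu^*(F_t-\min_M F_t)$ takes values in $[0,\pl F_t\plaa]$, so, $\lambda$ being $[0,1]$-valued, $\pl F^\lambda_t\plaa\le\pl F_t\plaa$. Plugging this into Theorem 3.1 / the energy–capacity inequality for the symplectic manifold $(\Sigma,\o_\Sigma)$ — legitimate precisely because $\Sigma$ is Hausdorff — I get
\begin{align*}
\tfrac12\,c_\Sigma(U)\le E^\Sigma(U)\le E_H^\Sigma(\Psi)&\le\l(\Phi_{F^\lambda})=\int_0^1\pl F^\lambda_t\plaa\,dt\\
&\le\int_0^1\pl F_t\plaa\,dt=\l(\Phi_F),
\end{align*}
where $E^\Sigma$ and $E_H^\Sigma$ are the displacement energy and the Hofer norm of $(\Sigma,\o_\Sigma)$. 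Taking the infimum over all $F\mt\phi$ yields $\ra(\phi,\id)\ge\tfrac12\,c_\Sigma(U)>0$, which is the non-degeneracy. The last assertion is then immediate: on a compact symplectic manifold every Hamiltonian vector field is complete, so a symplectic realization $\mu:\Sigma\to M$ with $\Sigma$ compact and Hausdorff is automatically a complete Hausdorff symplectic realization.

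The one genuinely delicate step is the truncation passing from $\mu^*F$ to $F^\lambda$: the lift $\mu^*F$ is constant along the (generally non-compact) fibres of $\mu$ and hence is not compactly supported on $\Sigma$, yet the cut-off must neither undo the displacement of the \emph{fixed} ball $U$ nor increase the oscillation of the generating Hamiltonian. The first requirement forces $\lambda$ to be identically $1$ on a whole neighbourhood of the compact trajectory tube $K$ — exactly where one needs $\Sigma$ Hausdorff (cf. the remark following Definition 4.1) — and the second is taken care of by subtracting $\min_M F_t$ before multiplying by $\lambda$. A lesser, routine point is the global existence of $\{\psi_F^t\}$ on $[0,1]$, which is precisely what the completeness hypothesis delivers (applying it, if one prefers, to the time-independent suspension of $F$ on $I\times M$).
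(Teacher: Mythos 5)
Your proposal is correct and is essentially the paper's own argument: the paper proves this corollary by the single remark that in the proof of Theorem~1.3 one may replace the source map $\ss$ by the realization $\mu$, which is exactly what you carry out. The only differences are expository refinements on your part — a static cut-off equal to $1$ near the compact trajectory tube instead of the paper's moving cut-off $\lambda\ci(\tilde\phi_F^t)^{-1}$, and the subtraction of $\min_M F_t$ before truncating, which cleanly justifies the oscillation bound $\pl F^\lambda_t\plaa\leq\pl F_t\plaa$ that the paper asserts without this normalization.
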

In fact, it suffices to replace $\ss$ by $\mu$.

\subsection{Proof of Theorem 1.3} Let $\m$ be integrated by a Hausdorff symplectic groupoid $(\gg,\o)$.
 In view of \cite{Mk} we may and do assume that  $\gg$ is $\ss$-connected.

Suppose $\phi\in\ham\m\setminus\{\id\}$ and let $F\mt\phi$ with $F\in\ccmm$, i.e. $\Phi_F=(\{\phi_F^t\},F)$ and $\phi=\phi_F^1$. Denote by $\tilde X_F$ the Hamiltonian vector field on  $\gg$
generated by $\ss^*F=F\circ\ss$. According to Proposition 4.6 the isotopy $\{\tilde\phi_F^t\}$ of $\tilde X_F^t$ is defined for all $t\in\R$. The isotopy
$\Phi_{\ss^*F}=(\{\tilde\phi_F^t\},\ss^*F)$ is Hamiltonian but not necessarily compactly supported. However its length is well defined by (1.4) and
\begin{equation} \l(\Phi_{\ss^*F})=\l(\Phi_F).
\end{equation}  Furthermore, in view of (2.8) we have $\ss_*\tilde X_F=X_F$ and we get that the
isotopy $\tilde\phi_F^t$ projects to $\phi_F^t$, that is  for all $t$ \begin{equation}\ss\ci\tilde\phi_F^t=\phi_F^t\ci\ss.\end{equation}

Now fix an open subset $U\s M$ such that $\phi(U)\cap U=\emptyset$. In view of (4.2) we have $\tilde\phi^1_F(\tilde U)\cap \tilde U=\emptyset$, where  $\tilde U=\ss^{-1}(U)$. Let $\lambda:\gg\to[0,1]$ be a smooth compactly supported cut-off function such that $\lambda=1$ on a neighborhood of a fixed ball $B\s \tilde U$. 
Denote by  $\{\psi^t\}$ the isotopy on $(\gg,\o)$ generated by $F^{\la}\in\cc_c(I\t\gg,\R)$ such that 
$$F^{\la}(t,x)=\lambda((\tilde\phi_F^t)^{-1}(x))F(t,\ss(x)),\quad\forall (t,x)\in I\t\gg.$$  It follows that $\psi=\psi^1$ displaces $B$. Thus, in view
of the energy-capacity inequality $\frac{1}{2}c(B)\leq E(B)$ for $(\gg,\o)$ and (4.1) we obtain
\begin{equation} 0<\frac{1}{2}c(B)\leq E(B)\leq\length(\Phi_{F^\la})\leq\l(\Phi_{\ss^*F})=\l(\Phi_F),
\end{equation}
where $c(B)$ is the Gromov width of $B$ in $\gg$. It follows that $\ra(\phi,\id)>0$, as required.

\section{Examples of integrable Poisson manifolds}

There are several remarkable papers on the integrability of  Poisson manifolds. Here we give some examples of Hausdorff integrable Poisson
manifolds.

\medskip
(1) First we observe that 
the Poisson structure of a symplectic manifold $(M,\o)$ is integrable. In fact, one can integrate it by the pair groupoid $M\t \overline M$ equipped with the
symplectic structure $\o\oplus-\o=\ss^*\o-\tt^*\o$. 
\medskip

(2) Let $\frak g$ be a finite dimensional Lie algebra.  A \emph{Lie-Poisson structure} on $\frak g^*$, the dual of $\frak g$, is integrable according to Example 4.3(4).
Here the leaves of the symplectic foliation on $\frak g^*$ coincide with orbits of the coadjoint action of $G$ on $\frak g$, where $G$ is a connected Lie group with the Lie algebra $\frak g$.

More precisely, there are two left actions of $G$ on itself, namely by the left translations $L_g$ and by the right translations $R_{g^{-1}}$, $g\in G$. The lifts of these actions
to the cotangent bundle $T^*G$ are defined by $\Phi_g=L_{g^{-1}}^*$ and $\Psi_g=R^*_g$, resp. Then $\Phi_g$ and $\Psi_g$ are Hamiltonian actions that have equivariant momentum maps
$J^\Phi,\, J^\Psi: T^*G\to\frak g^*$. Thus we obtain a symplectic groupoid $(T^*G\rr \frak g^*, J^\Psi, J^\Phi)$, where $J^\Psi$ is the source and $J^\Phi$ is the target.
For more details, see, e.g., \cite{Vai}, p.135-143.

\medskip

(3) Let $\m$ be a Poisson manifold. A submanifold $N\s M$ is called a \emph{Lie-Dirac submanifold} (cf. \cite{CF2}, section 9.3) if there is a vector bundle $E$ over $N$
such that $T_NM=TN\oplus E$ and $E^0\s T^*M$ is a subalgebroid. Equivalently, $N$ can be described as a Poisson-Dirac submanifold of $M$ which admits a Dirac projection
$p:T_NM\to TN$ (Proposition 7 in \cite{CF2}).
In view of Theorem 9 in \cite{CF2}  any Lie-Dirac submanifold $N$ of an integrable Poisson manifold $M$, is also integrable, and $\Sigma(N)$ is a symplectic subgroupoid
of $\Sigma(M)$ (cf. Theorem 4.5). Thus any Lie-Dirac submanifold $N$ of an integrable Poisson manifold $M$ with $\Sigma(M)$ Hausdorff admits a Hausdorff integration.

\medskip

(4)   Let $G$ be a Lie group with its Lie algebra $\frak g$ and with a free proper Hamiltonian action on a symplectic manifold $(N,\sigma)$ such that the momentum map $J: N\to\frak g^*$ is equivariant. Then the quotient $N/G$, together with the reduced Poisson structure induced
from $N$, is an integrable Poisson manifold. Indeed, a symplectic groupoid integrating $N/G$ is given by the Marsden-Weinstein reduction  of the pair groupoid
$N\t\overline N$  with respect to the diagonal action of $G$, denoted by $N\star\overline N/G$. That is, $N\star\overline N/G\rr N/G$, where 
$$  N\star\overline N=\{(x,y)\in N\t\overline N|\, J(x)=J(y)\}. $$
 For details, see \cite{DZ}, p.227.

\medskip

(5) Assume  that $\m$ is an integrable Poisson manifold with $\Sigma(M)$ Hausdorff and that $\Psi:G\t M\to M$ is a proper and free Poisson action
of a Lie group $G$. Therefore the lifted
action $\Sigma(\Psi): G\t\Sigma(M)\to \Sigma(M)$ is proper and free as well. It follows that $0\in\frak g^*$ is a regular value of the momentum map $J:\Sigma(M)\to\frak g^*$.
Since $J$ is a $G$-equivariant groupoid homomorphism, its kernel $J^{-1}(0)\s\Sigma(M)$ is a $G$-invariant Lie subgroupoid. It follows that  the symplectic quotient $J^{-1}(0)/G$ carries a Hausdorff symplectic groupoid structure, that is we get a  Hausdorff symplectic groupoid  $J^{-1}(0)/G\rr M/G$.
Consequently, the quotient Poisson manifold $M/G$ is Hausdorff integrable. See \cite{CF3}, p.101, for details.

\end{document}